\newcommand{\R}{\mathbb{R}}
\newcommand{\C}{\mathbb{C}}
\newcommand{\kahler}{K\"ahler }
\newtheorem{thm}{Theorem}[section]
\newtheorem{cor}[thm]{Corollary}
\newtheorem{lemma}[thm]{Lemma}
\newtheorem{prop}[thm]{Proposition}
\newtheorem{rmk}{Remark}
\newtheorem{Def}[thm]{Definition}
\title{THE GLOBAL EXISTENCE AND CONVERGENCE OF THE CALABI FLOW ON $\mathbb{C}^n/\mathbb{Z}^n+i \mathbb{Z}^n$}
\date{December 10, 2011}
\author{Renjie Feng \quad Hongnian Huang}
\thanks{The research of the second named author is financially supported by CIRGET (Centre de
recherche en g\'eom\'etrie et topologie), Montreal, Canada and FMJH
(Fondation math\'ematique Jacques Hadamard), Paris, France.} 
\begin{document}

\maketitle

\begin{abstract}
In this note, we study the long time existence of the Calabi flow on $X = \mathbb{C}^n/\mathbb{Z}^n+i \mathbb{Z}^n$. Assuming the uniform bound of the total energy, we establish the non-collapsing property of the Calabi flow by using Donaldson's estimates and Streets' regularity theorem. Next we show that the curvature is uniformly bounded along the Calabi flow on $X$ when the dimension is 2,  partially confirming Chen's conjecture.  Moreover, we show that the Calabi flow exponentially converges to the flat K\"ahler metric for arbitrary dimension if the curvature is uniformly bounded, partially confirming Donaldson's conjecture.
\end{abstract}

\section{Introduction}

The Calabi flow was invented by Calabi \cite{Ca1} to search for the canonical metrics in a given K\"ahler class. Let $\varphi$ be a K\"ahler potential and $S$ be the scalar curvature of $\varphi$, its equation is
\begin{equation}\label{calabi}
\frac{\partial \varphi}{\partial t} =S - \underline{S},
\end{equation}
where $\underline{S}$ is the average of the scalar curvature. Since it is a 4th order parabolic equation, its long time existence and convergence are hard to study. The Riemann surface case is settled down by Chrusci\'el \cite{Chr} and reproved by Chen \cite{Chen1}. The study of the Calabi flow on the ruled manifolds is elaborated by Guan \cite{Gu} and Sz\'ekelyhidi \cite{Sz}. Later, Chen and He prove the short time existence of the Calabi flow in \cite{ChenHe}. In the same paper, they prove that the obstruction of the long time existence of the Calabi flow is the Ricci curvature. Furthermore, they establish the stability property of the Calabi flow near a cscK metric. The stability property is generalized by Zheng and the second named author in \cite{HZ} for the case of extremal metrics.  Tosatti and Weinkove \cite{TW} also prove the stability the Calabi flow when the first Chern class $c_1 = 0$ or $c_1 < 0$. The stability problem is further studied in Chen and Sun's work \cite{CS}, they prove that constant scalar curvature K\"ahler metric ``adjacent" to a fixed K\"ahler class is unique up to isomorphism.

The long time existence problem of the Calabi flow largely remains open. Assuming the long time existence, Donaldson describes the limiting behavior of the Calabi flow in \cite{D5}. Sz\'ekelyhidi \cite{Sz2} shows that if the Calabi flow exists for all time in toric varieties, then the infimum of the Calabi energy is equal to the supremum of the normalized Futaki invariant over all destabilizing test-configurations, partially confirming Donaldson's conjecture in \cite{D6}. 

The global convergence problem of the Calabi flow also largely remains open. An application of the global convergence of the Calabi flow is to solve a conjecture proposed by Apostolov, Calderbank, Gauduchon and T{\o}nnesen-Friedman \cite{ACGT} : A projective bundle $(M,J) = P(E)$ over a compact curve of genus $\geq 2$ admits an extremal K\"ahler metric in some K\"ahler class if and only if $E$ decomposes as a direct sum of stable sub-bundles.

One of the methods in studying the long time existence problem is the blow-up analysis. It is firstly adopted in Chen and He's work \cite{ChenHe2}. They establish the following {\bf weak regularity theorem}: Suppose the $L^\infty$ norm of Riemann curvature tensor of the Calabi flow is bounded by 1 in the time interval $[-1,0]$, then
\begin{eqnarray}
\label{weak}
\int_X |\nabla^k Rm(0,x)|^2 d \omega < C \left(n, k, \int_X |Rm(-1,x)|^2 d \omega \right).
\end{eqnarray}

\begin{rmk}
Streets also obtains a similar result in \cite{St2}.
\end{rmk}

When the Calabi energy is small in certain toric Fano surfaces, Chen and He are able to obtain a uniform Sobolev constant along the Calabi flow \cite{ChenHe3}. Hence they derive the {\bf regularity theorem} of the Calabi flow: Suppose the $L^\infty$ norm of Riemann curvature tensor of the Calabi flow is bounded by 1 in the time interval $[-1,0]$, then
\begin{eqnarray}
\label{reg}
\max_{x \in X} |\nabla^k Rm(0,x)| < C(n, k).
\end{eqnarray}

\begin{rmk}
The regularity theorem is called Shi's estimate in the Ricci flow \cite{Shi}.
\end{rmk}

After obtaining the uniform bound of Sobolev constant and the regularity theorem, Chen and He rule out the singularities along the Calabi flow and show that the Calabi flow converges to an extremal metric in the Cheeger-Gromov sense. This result gives us a better understanding of Chen's conjecture (see e.g. \cite{ChenHe3}) :
$$\bold{Conjecture}: \,\, \text{The Calabi flow exists for all time}.$$

Motivated by Donaldson's work in \cite{D1} \cite{D2} \cite{D3} \cite{D4}, the second named author studies the classification of singularities of the Calabi flow on toric varieties by assuming the total energy bound, the regularity theorem and the non-collapsing property \cite{H1}. Later, Streets proves the regularity theorem for the Calabi flow \cite{St}. Then the remaining obstacle for the long time existence of the Calabi flow on a toric variety is the non-collapsing property of the Calabi flow.

The Calabi flow on toric varieties is a parabolic version of the linearized Monge-Amp\`ere equation. The linearized Monge-Amp\`ere equation is studied in the work of Caffarelli and Guti\'errez \cite{CG}. Their work has been used in Trudinger-Wang's solution of the Bernstein problem \cite{TrWa} and the affine Plateau problem \cite{TrWa1}. In Donaldson's work on the existence of cscK metrics on toric surfaces, he also uses Caffarelli and Guti\'errez's work to obtain the interior regularity of his continuous method \cite{D2} and the $M$-condition near the boundary in order to resolve the non-collapsing issue \cite{D4}.  Caffarelli and Guti\'errez's work also finds applications in Chen Li and Sheng's work on the existence of extremal metrics on toric surfaces \cite{CLS}. Sz\'ekelyhidi and the first named author apply the ideas of Trudinger-Wang and Donaldson to solve the Abreu's equation on Abelian varieties \cite{FS}.

The difficulty of the long time existence of the Calabi flow is to show the non-collapsing property, i.e., the injectivity radius has a uniform lower bound in the blow-up analysis. More details can be found in \cite{BB}, \cite{CZ}, \cite{KJ} and \cite{MT} where authors explain how to use the non-collapsing property of Ricci flow to classify the singularities of the Ricci flow in a 3-manifold.

\subsection{Main results}
For simplicity, we only consider the long time existence and the global convergence of the Calabi flow on $X = \mathbb{C}^n / \Lambda $, where $\Lambda = \mathbb{Z}^n + i \mathbb{Z}^n$. There is a natural $T^n$ action on $X$ via the translation in the Lagrangian subspace $i \R^n\subset\C^n$. Let $\omega_0$  be a flat metric. We consider the space of $T^n$-invariant \kahler metric (Section 2): $$\mathcal H_{T^n}=\{\phi\in C_{T^n}(X): \omega_\phi=\omega_0+\partial\bar\partial\phi>0 \}.$$   Then we can prove the following non-collapsing theorem along the Calabi flow in the space $\mathcal H_{T^n}$.

\begin{thm}
\label{non-collapsing}
Let $\omega_{\phi_{(-1)}}\in \lambda \mathcal H_{T^n} $ be an initial metric, where $\lambda > 1$ is an arbitrary rescaling factor. Suppose that:
\begin{itemize}

\item The Calabi flow exists for $t \in [-1,0]$ in $ \lambda \mathcal H_{T^n} $ and the $L^\infty$ norm of Riemann curvature tensor of the Calabi flow is uniformly bounded by 1 on $X\times [-1,0]$.

\item  The total energy is bounded at the end point $t=0$, i.e.,
$$
\int_X |Rm(0, x)|^n \  \omega_{\phi_{0}}^n < C,
$$
where $C$ is a positive constant.

\item There is a constant $M$ such that the Legendre transform of the K\"ahler potential of $\omega_{\phi_{0}}$ satisfies the $M$-condition.

\item $|Rm(0,x)| = 1$  for some $x \in X$.
\end{itemize}

Then the injectivity radius of $x$ at time $t=0$ is bounded from below by $C_1$ depending only on $n, M$ and $C$.
\end{thm}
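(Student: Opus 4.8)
The plan is to pass from the complex picture to the symplectic (action--angle) description of $T^n$-invariant metrics on $X$, use Donaldson's estimates to show that the symplectic potential of $\omega_{\phi_0}$ has a uniformly pinched Hessian, conclude that $\omega_{\phi_0}$ is uniformly equivalent to the flat metric, and then read off the injectivity radius lower bound from Cheeger's lemma. The thread to keep in view is scale invariance: every constant produced must depend only on $n$, $M$ and $C$, so that the final bound does not see the rescaling factor $\lambda$.

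First I would fix coordinates. A $T^n$-invariant \kahler metric $\omega_\phi$ on $X$ is described by its symplectic potential $u$, a convex function on the moment torus $\R^n/\Z^n$ with $u-\half|\xi|^2$ periodic; $u$ is the Legendre transform of the \kahler potential of $\omega_\phi$, and it is on $u$ that the $M$-condition is imposed. In the action--angle coordinates $(\xi,\theta)$ the metric is
\[
g=\sum_{i,j}u_{ij}\,d\xi_i\,d\xi_j+\sum_{i,j}u^{ij}\,d\theta_i\,d\theta_j,\qquad (u^{ij})=(u_{ij})^{-1},
\]
the volume form is Lebesgue measure $d\xi\,d\theta$, and $Rm$ is a universal expression in $(u_{ij})$, $(u^{ij})$ and the third and fourth derivatives of $u$. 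Hence $|Rm|\le1$ controls the Abreu scalar curvature $\mathcal S=-\sum_{i,j}\d_i\d_j u^{ij}$, the total-energy hypothesis becomes $\int_{\R^n/\Z^n}|Rm(\xi)|^n\,d\xi<C$, and all of these quantities as well as the $M$-condition are unchanged by the rescaling $\omega_0\mapsto\lambda\omega_0$.

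Then, using Donaldson's estimates and the Caffarelli--Guti\'errez interior regularity for the linearized Monge--Amp\`ere operator, I would argue that the $M$-condition on $u$, combined with the bound on $\mathcal S$ coming from $|Rm|\le1$ and the energy bound $C$ (and, where a uniform bound on the derivatives of $Rm$ at $t=0$ is needed, Streets' regularity theorem applied on $[-1,0]$), forces strict convexity of $u$ with a quantitative lower bound for $D^2u$, and then, by bootstrapping on the compact torus, a matching upper bound, i.e.
\[
\Lambda^{-1}I\ \le\ D^2u\ \le\ \Lambda I\quad\text{on }\R^n/\Z^n,\qquad \Lambda=\Lambda(n,M,C).
\]
Since $X$ has no boundary these interior estimates are in fact global, so the pinching is uniform; translating back, $\Lambda^{-1}\lambda\,g_{\mathrm{flat}}\le g_{\phi_0}\le\Lambda\lambda\,g_{\mathrm{flat}}$ for the unit flat metric $g_{\mathrm{flat}}$ on $X$.

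Finally, this uniform equivalence together with $\mathrm{vol}(X,g_{\phi_0})=\lambda^n\ge1$ gives a lower bound $\mathrm{vol}\big(B_1(x),g_{\phi_0}\big)\ge v_0(n,M,C)>0$ for the unit geodesic ball, uniformly in $\lambda$, and since $|Rm|\le1$ on all of $X$ at $t=0$, Cheeger's injectivity radius lemma yields $\mathrm{inj}(x)\ge C_1(n,M,C)>0$. The main obstacle is the third step: producing the uniform two-sided Hessian bound from the $M$-condition and the curvature and energy bounds with constants depending only on $n$, $M$ and $C$; that is, checking that $|Rm|\le1$, supplemented by Streets' regularity and the energy bound, is precisely the input Donaldson's estimates require, and that the output is scale invariant.
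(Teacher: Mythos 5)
Your overall frame---pass to symplectic coordinates, obtain two-sided control of $D^2u$ from Donaldson's estimates, conclude metric comparability and hence an injectivity radius bound---matches the paper's strategy in outline, and your attention to scale invariance is exactly right. But there is a genuine gap at the central step: you claim a \emph{global} uniform pinching $\Lambda^{-1}I \le D^2u \le \Lambda I$ on all of $\R^n/\Z^n$ with $\Lambda=\Lambda(n,M,C)$. The upper bound is indeed global (Lemma 4 of \cite{D3}: the curvature bound plus the $M$-condition gives $(u_{ij})<C(M)I_n$ at every point), but the lower bound is not, and cannot be produced by ``bootstrapping on the compact torus'': nothing in the hypotheses prevents $(u_{ij})$ from nearly degenerating at points far from $x$ where the curvature is small. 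This is precisely why the theorem only asserts an injectivity radius bound \emph{at the point $x$ with $|Rm(0,x)|=1$}, and why that hypothesis is listed at all---your proposal never uses it. The paper's proof obtains the lower bound $(u_{ij}(x))>C_2 I_n$ \emph{only at $x$}, via Proposition 11 of \cite{D3} and the regularity theorem, where the normalization $|Rm|(x)=1$ together with the energy bound is essential (if $(u_{ij})(x)$ had a tiny eigenvalue, a rescaling argument combined with the interior estimates would force $|Rm|(x)\ll 1$, a contradiction). It then propagates this bound to nearby points by Donaldson's Harnack-type Lemma 7, $(u_{ij}(y))\ge e^{-2d}(u_{ij}(x))$ with $d$ the Riemannian distance (which is controlled by the Euclidean distance thanks to the global upper bound), so the two-sided pinching holds only on a ball about $x$ of definite size. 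That local pinching would still feed into your Cheeger-type ending (a lower volume bound for the unit geodesic ball at $x$ plus $|Rm|\le 1$), which is a legitimate alternative to the paper's appeal to Lemmas 8 and 10 of \cite{D3}; but as written, the derivation of the lower Hessian bound is where your proof breaks.
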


Next we obtain the following long time existence result of the Calabi flow which partially confirms Chen's conjecture.

\begin{thm}
\label{long}
In  dimension 2, given any initial data in $\mathcal H_{T^n}$, the Calabi flow exists for all time in $\mathcal H_{T^n}$ and the curvature is uniformly bounded along the flow.
\end{thm}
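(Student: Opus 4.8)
The plan is to deduce this from Theorem \ref{non-collapsing} by a blow-up argument, the new input in complex dimension $2$ being that the total-energy hypothesis of that theorem comes for free. Since $X$ is an abelian variety, $c_1(X)=0$, so $\underline S\equiv 0$ and the flow reads $\partial_t\varphi=S$; the Calabi energy $\mathcal C(\omega):=\int_X S^2\,\omega^2$ is then monotone non-increasing (with $\frac{d}{dt}\mathcal C=-2\int_X|\mathcal D S|^2\,\omega^2$, $\mathcal D$ the Lichnerowicz operator) and bounded below by $0$, hence $\le\mathcal C(\omega(0))$ for all time. Combining the Chern--Gauss--Bonnet and signature formulas with the K\"ahler identity $|W^+|^2=S^2/24$ and $\chi(X)=\tau(X)=0$ shows that $\int_X|Rm|^2\,\omega^2$ is a fixed multiple of $\int_X S^2\,\omega^2$, so the total energy $\int_X|Rm|^2\omega^2\le C_0\,\mathcal C(\omega(0))$ is uniformly bounded along the flow (and is scale invariant).

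Next I would argue by contradiction. If the flow did not exist for all time with uniformly bounded curvature, then by Chen--He's short-time existence and Streets' regularity theorem $\sup|Rm|=\infty$ on the maximal interval $[0,T_{\max})$. A Hamilton-type point selection gives $(x_i,t_i)$ with $\lambda_i:=|Rm|(x_i,t_i)\to\infty$, almost maximal on ever longer backward intervals and with $\lambda_i^2 t_i\to\infty$. Parabolically rescaling, $\omega_i(s):=\lambda_i\,\omega(\cdot,\,t_i+\lambda_i^{-2}s)$ again solves the ($\underline S=0$) Calabi flow, lies in $\lambda_i\mathcal H_{T^n}$ with $\lambda_i>1$, satisfies $|Rm_{\omega_i}|\le 1$ on $[-1,0]$ with $|Rm_{\omega_i}|(x_i,0)=1$, and has total energy at $s=0$ equal to $C_0\,\mathcal C(\omega(t_i))\le C_0\,\mathcal C(\omega(0))$ by scale invariance. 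It then remains to verify the $M$-condition for $\omega_i(0)$ with an $i$-independent constant; in the $T^n$-invariant torus setting the moment image is all of $\R^n$, so there is no polytope boundary, and the symplectic potential differs from $\tfrac12|\mu|^2$ by a quantity controlled uniformly by the periodic part of the K\"ahler potential under the curvature bound. Theorem \ref{non-collapsing} then bounds the injectivity radius at $x_i$ in $\omega_i(0)$ below by some $\iota_0>0$ independent of $i$.

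With $|Rm_{\omega_i}|\le 1$ on $[-A_i,0]$, $A_i\to\infty$, and the injectivity-radius bound, the estimates \eqref{weak}--\eqref{reg} give uniform local $C^\infty$ control of $Rm_{\omega_i}$ and of the metrics, so a pointed Cheeger--Gromov subsequence converges to a complete ancient Calabi flow $(X_\infty,J_\infty,\omega_\infty(s))$, $s\in(-\infty,0]$, with bounded curvature and geometry, $\int_{X_\infty}|Rm_{\omega_\infty}|^2\,\omega_\infty^2\le C_0\,\mathcal C(\omega(0))$, and $|Rm_{\omega_\infty}|(x_\infty,0)=1$, so the limit is not flat. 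Moreover, since $t_i+\lambda_i^{-2}s\to T_{\max}$ for every fixed $s$, one has $\mathcal C(\omega_i(s))=\mathcal C(\omega(t_i+\lambda_i^{-2}s))\to\lim_{t\to T_{\max}}\mathcal C(\omega(t))$, so the Calabi energy of the limit flow is this single constant for all $s$.

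Constant Calabi energy forces $\int_{X_\infty}|\mathcal D S_\infty|^2\,\omega_\infty^2=0$, i.e.\ $\mathcal D S_\infty\equiv 0$, so $\omega_\infty$ is extremal. The limit complex manifold $(X_\infty,J_\infty)$, being the rescaled limit (with $\lambda_i\to\infty$) of the fixed complex torus $\C^2/\Lambda$, is $\C^2$; then $S_\infty\in L^2$ over an infinite volume forces $S_\infty\to 0$ at infinity, the holomorphic vector field $\nabla^{1,0}S_\infty$ vanishes at infinity hence identically, and $S_\infty\equiv 0$. So $\omega_\infty$ is a scalar-flat, hence anti-self-dual, K\"ahler metric on $\C^2$; with $H^2(\C^2)=0$ there is no divisor to support negative curvature and $\omega_\infty$ must be flat, contradicting $|Rm_{\omega_\infty}|(x_\infty,0)=1$. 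Therefore the curvature stays bounded on $[0,T_{\max})$, and Streets' regularity theorem then gives $T_{\max}=\infty$ with a uniform bound for all time. I expect two steps to be delicate: the uniform $M$-condition for the rescaled metrics, which is the actual bridge to Theorem \ref{non-collapsing}, and the closing rigidity -- identifying the bubble as $\C^2$ and excluding a non-flat scalar-flat K\"ahler metric on it. The latter \emph{no nontrivial bubble} statement is, I think, the main obstacle.
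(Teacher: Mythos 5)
Your overall strategy matches the paper's: in complex dimension $2$ the Gauss--Bonnet/signature identities make $\int_X|Rm|^2\omega^2$ comparable to the Calabi energy, which is monotone, so the total-energy hypothesis of Theorem \ref{non-collapsing} is free; one then blows up, invokes non-collapsing, extracts a limit flow, shows its scalar curvature vanishes (your ``constant Calabi energy, then $S_\infty\in L^2$ over infinite volume forces $S_\infty\equiv 0$'' is essentially the paper's argument in Section 5.1), and must rule out the bubble. But the step you yourself flag as the main obstacle is a genuine gap, and your sketch of it would not survive scrutiny. First, the identification of the bubble as $\C^2$ is unjustified: a pointed Cheeger--Gromov limit of rescaled tori is a priori just some complete K\"ahler surface. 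Second, the claim that a complete, non-flat, scalar-flat K\"ahler metric on $\C^2$ with bounded geometry is excluded because ``$H^2(\C^2)=0$ leaves no divisor to support negative curvature'' is not an argument; scalar-flat K\"ahler rigidity in the ALE/complete noncompact setting is a serious matter and is not settled by a cohomological remark.

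The paper closes exactly this hole by never leaving the symplectic picture. The rescaling $u_i(t,x)=\lambda_i\,u\bigl(\tfrac{t-T_i}{\lambda_i^2},\tfrac{x-p_i}{\lambda_i}\bigr)$ leaves the Euclidean gradient $D_xu$ unchanged, so the $M$-condition passes to the limit: the bubble is a convex function $\bar u$ on $\R^2$ with $|D\bar u|<M$, $|Rm|\le 1$, $|Rm|(0)=1$ and $\bar S=0$, and such a $\bar u$ cannot exist by Donaldson's Theorem 2 in \cite{D3} (equivalently the Bernstein theorem of Jia--Li \cite{JL}: a global solution of $\sum u^{ij}_{\ ij}=0$ on $\R^2$ is quadratic). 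The bounded-gradient condition is the decisive input there, which is why the $M$-condition is the real bridge and why the paper devotes Section 3 to proving it is preserved -- not ``under the curvature bound'' as you suggest, but via the monotonicity of $\int_P(u-\underline u)^2\,dx$ along the flow, which gives a $C^0$ bound and then a $C^1$ bound by convexity. You should replace your closing rigidity argument with this Bernstein-type statement (and the accompanying bookkeeping that the $M$-condition is uniform in $i$), at which point your proof becomes essentially the paper's.
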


For the global convergence, Donaldson has the conjecture that: If the Calabi flow exists for all time and there exists a cscK metric in the K\"ahler class, then the Calabi flow converges to a cscK metric \cite{D5}. Berman \cite{B1} proves that the Calabi flow converges to a K\"ahler Einstein metric in the weak topology of currents if the Calabi flow exists for all time. Our following result confirms Donaldson's conjecture in $X$ for arbitrary dimension.

\begin{thm}
\label{global}
If the Calabi flow exists for all time in $\mathcal H_{T^n}$ and the curvature is uniformly bounded, then it converges to $\omega_0$ which is a flat metric.
\end{thm}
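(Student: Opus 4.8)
The flat metric $\omega_0$ is the only candidate limit: since $c_1(X)=0$ we have $\underline{S}=0$, so a constant-scalar-curvature metric in the class is scalar-flat, and on the Calabi--Yau manifold $X$ such a metric has a harmonic Ricci potential, hence is Ricci-flat, hence equals $\omega_0$ by the uniqueness part of the Calabi--Yau theorem. So the plan is to show $\omega_{\varphi(t)}\to\omega_0$ in $C^\infty$ and then upgrade this to an exponential rate by linearizing at $\omega_0$. I will normalize the $T^n$-invariant potential $\varphi(t)$ --- a function on the momentum torus $\R^n/\Z^n$ --- by $\int_X\varphi(t)\,\omega_0^n=0$ at each time; this changes $\varphi$ by a constant and affects neither $\omega_{\varphi(t)}=\omega_0+\ddbar\varphi(t)$ nor the Calabi flow on metrics. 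Set $V=[\omega_0]^n$.

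\emph{Step 1: uniform regularity.} First I would parabolically rescale so $|Rm|\le1$ and apply Streets' regularity theorem in the form \eqref{reg} on each interval $[t-1,t]$, obtaining $\sup_X|\nabla^k Rm|(t)\le C_k$ for all $k$ and all large $t$. Next I would check that the hypotheses of Theorem~\ref{non-collapsing} hold uniformly in $t$ --- the total energy is $\le(\sup|Rm|)^n V$, and on the boundaryless momentum torus the Legendre transform of $\varphi(t)$ should satisfy an $M$-condition uniformly along the flow --- so that the injectivity radius is uniformly bounded below. Then non-collapsing, the curvature bounds, and Donaldson's interior estimates in momentum coordinates (\cite{D2,D4}) would combine to give uniform $C^\infty(\R^n/\Z^n)$ bounds for $\varphi(t)$ and the uniform equivalence $C^{-1}\omega_0\le\omega_{\varphi(t)}\le C\omega_0$; in particular, any sequence of times has a subsequence along which $\varphi(t_j)\to\varphi_\infty$ in $C^\infty$ with $\omega_{\varphi_\infty}\in[\omega_0]$.

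\emph{Step 2: convergence to $\omega_0$.} Here I would use that the Calabi energy $\mathrm{Ca}(\varphi)=\int_X S^2\,\omega_\varphi^n$ is non-increasing along the flow (Calabi's computation $\frac{d}{dt}\mathrm{Ca}=-2\int_X|\dbar\nabla^{1,0}S|^2\,\omega_\varphi^n$) and that $\frac{d}{dt}\mathcal M(\varphi(t))=-\mathrm{Ca}(\varphi(t))$ for the Mabuchi functional $\mathcal M$. Since $X$ is Calabi--Yau with $\omega_0$ Ricci-flat, $\mathcal M(\varphi)=\int_X\log\frac{\omega_\varphi^n}{\omega_0^n}\,\omega_\varphi^n\ge 0$ by Jensen (both volumes equal $V$), so $\int_0^\infty\mathrm{Ca}(\varphi(t))\,dt\le\mathcal M(\varphi(0))<\infty$, which with monotonicity forces $\mathrm{Ca}(\varphi(t))\to0$. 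Then for any $t_j\to\infty$ I would pass to a $C^\infty$-limit $\varphi(t_j)\to\varphi_\infty$ as in Step 1; since $\int_X S(\omega_{\varphi_\infty})^2\,\omega_{\varphi_\infty}^n=\lim_j\mathrm{Ca}(\varphi(t_j))=0$, the metric $\omega_{\varphi_\infty}$ is scalar-flat, hence equals $\omega_0$ by the rigidity above, so $\varphi_\infty\equiv0$. As every subsequential limit is $0$, this gives $\varphi(t)\to0$, i.e.\ $\omega_{\varphi(t)}\to\omega_0$, in $C^\infty$.

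\emph{Step 3: exponential rate, and the main obstacle.} Finally, for $t$ large $\varphi(t)$ is small in $C^\infty$, so I would expand the flow at $\omega_0$ as $\partial_t\varphi=-\mathcal L\varphi+Q(\varphi)$, where $\mathcal L$ is the Lichnerowicz operator of $\omega_0$ and $Q$ gathers the terms of order $\ge2$ (quasilinear of order $\le4$, of $\omega_0$-mean zero, vanishing to second order at $\varphi=0$). Because $\omega_0$ is flat, $\mathcal L$ is a positive constant times $\Delta_0^2$, self-adjoint with kernel the constants (a function on the compact torus with holomorphic $(1,0)$-gradient is constant), so it has a spectral gap $\mathcal L\ge\delta>0$ on $\{\int_X\,\cdot\,\omega_0^n=0\}$. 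Differentiating $\|\varphi(t)\|_{H^s}^2$ for $s$ large, the gap (after integration by parts) controls the linear term by $-\delta\|\varphi(t)\|_{H^s}^2$ and the $C^\infty$-smallness from Step 2 absorbs the quasilinear term, giving $\frac{d}{dt}\|\varphi(t)\|_{H^s}^2\le-\delta\|\varphi(t)\|_{H^s}^2$ and hence $\|\varphi(t)\|_{H^s}\le Ce^{-\delta t/2}$; interpolating against the $C^\infty$ bounds of Step 1 then yields exponential decay of every $C^k$ norm, so $\omega_{\varphi(t)}\to\omega_0$ exponentially in $C^\infty$ (alternatively, once the flow is $C^\infty$-close to $\omega_0$ one may invoke the stability of the Calabi flow near a cscK metric, available here since $c_1(X)=0$, cf.\ \cite{TW,ChenHe}). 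The hard part will be Step 1: extracting genuine non-collapsing, and then uniform $C^\infty$ control of $\varphi(t)$ in the fixed torus coordinates, from the bare hypotheses --- verifying the $M$-condition along the flow and exploiting the boundaryless torus structure to remove the diffeomorphism ambiguity are the delicate points; Steps 2 and 3 are then essentially routine.
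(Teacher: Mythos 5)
Your overall strategy is the paper's: monotonicity of the Calabi and Mabuchi energies, extraction of smooth subsequential limits, rigidity of the scalar-flat limit on the torus, and a stability theorem at $\omega_0$ to upgrade to exponential convergence (the paper invokes the stability result of \cite{HZ} and verifies its $C^{2,\alpha}$ hypothesis by an explicit third-derivative computation in momentum coordinates, rather than redoing the linearization as in your Step 3; your variant of Step 2, deducing $\mathrm{Ca}(t)\to 0$ from $\int_0^\infty \mathrm{Ca}\,dt\le \mathcal M(\varphi(0))<\infty$, is a clean and correct alternative to the paper's limiting-flow argument). However, there is a genuine gap in your Step 1, and it is not merely the points you flag. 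You assert that non-collapsing, the curvature bounds, and Donaldson's interior estimates combine to give the two-sided bound $C^{-1}\omega_0\le\omega_{\varphi(t)}\le C\omega_0$. The upper bound on $D^2u$ does follow from the $M$-condition together with $|Rm|\le 1$ (Donaldson's Lemma 4, quoted in Section 4 of the paper). But none of the tools you cite produce the lower bound: Donaldson's lower bound on $(u_{ij})$ (Lemma \ref{lower} here) is only available at a point where the curvature is normalized to equal $1$, which is exactly the regime that disappears as the flow converges and the curvature decays; and the interior estimates of \cite{D2,D4} rest on Caffarelli--Guti\'errez theory and are two-dimensional, whereas Theorem \ref{global} is claimed in all dimensions. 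Nothing in your Step 1 rules out the eigenvalues of $D^2u(t,\cdot)$ drifting to $0$, in which case your subsequential limits in Step 2 need not be K\"ahler metrics and the ``scalar-flat $\Rightarrow$ flat'' rigidity cannot be applied.

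The paper closes precisely this hole with the Mabuchi energy, used for a different purpose than in your Step 2: in symplectic coordinates $Ma(u)=-\int_P\log\det(u_{ij})\,dx$ is non-increasing, hence $\int_P\log\det(u_{ij})\,dx$ is bounded below by $-Ma(u(0))$ for all time; combined with the uniform upper bound on $D^2u$ this forces $\det(D^2u(t,\cdot))\ge c>0$ at some point $p\in P$, whence $(D^2u(t,p))\ge cI_n$, and Donaldson's Harnack-type propagation lemma (Lemma 7 of \cite{D3}, restated in Section 4) spreads this lower bound to all of $P$ with a constant depending only on the diameter. You actually have the needed ingredient in hand --- your Jensen inequality $\mathcal M\ge 0$ together with monotonicity is exactly the statement that $Ma(u(t))$ stays in a bounded interval --- but you deploy it only to show $\int_0^\infty\mathrm{Ca}\,dt<\infty$ and never to control the non-degeneracy of the metric. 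Reorganize so that the Mabuchi-energy bound delivers the pointwise lower bound on $D^2u$ before you pass to limits, and the rest of your argument goes through.
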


{\bf Acknowledgment: } The authors would like to express their gratitude to the anonymous referee for his numerous suggestions. The second named author is grateful for the consistent support of Professor Xiuxiong Chen, Pengfei Guan, Vestislav Apostolov and Paul Gauduchon. He also benefited from the discussion with Professor Shing-Tung Yau during his visit at Harvard University. He wants to thank Joel Fine, Si Li and Jeffrey Streets for useful conversations. Part of this work was done while the second named author was visiting the Northwestern University, he would like to thank Professor Steve Zelditch for his warm hospitality.

\section{Abelian varieties}\label{tori}

Let $X = \mathbb C^n/\Lambda$ where $\Lambda=\mathbb Z^n+i \mathbb Z^n$. We write each point as $z=\xi+i\eta$, where $\xi$ and $\eta \in \R^n$ and can be viewed as the periodic coordinates of $X$. Let
$$\omega_0=\frac{\sqrt{-1}}{2}\sum_{\alpha=1}^n d z_\alpha\wedge d\bar z_\alpha=\sum_{\alpha=1}^n d\xi_\alpha\wedge d\eta_\alpha$$
be the standard flat metric with associated local K\"ahler potential $\frac12|z|^2$. The group $T^n$ acts on $X$ via translation in $\eta$ variable in the Lagrangian subspace $i\R^n\subset \C^n$, thus we can consider the space of torus invariant  K\"ahler metrics in the fixed class $[\omega_0]$: $$\mathcal H_{T^n}=\{\phi\in C_{T^n}^\infty(M): \omega_\phi=\omega_0+\frac{\sqrt{-1}}{2}\partial\bar\partial\phi>0\}$$
Functions invariant under the translation of $ T^n$ are independent of $\eta$, so they are smooth functions on $X/T^n\cong T^n$, i.e., $\phi(\xi)$ is a periodic and smooth function on $\mathbb{R}^n$.  Without loss of generality, we can assume that the fundamental domain for the periodicity of $\phi$ is $[-\frac12,\frac 12]^n$. We can write the local K\"ahler potential in $\mathcal H_{T^n}$ in complex coordinates as
 $$v(\xi)=\frac12|\xi|^2+\phi(\xi)$$
 and the scalar curvature is
 $$S=-\sum_{i,j}v^{i\bar j}\log[\det(v_{a\bar b})]_{i\bar j}$$
where $v$ is a convex function on $\R^n$ since it is a local K\"ahler potential. Then we can take the Legendre transform of $v$, with dual coordinate $x=\nabla v(\xi)$. In fact, $x$ induces a Lie group moment map: $X\to T^n$. The transformed function $u(x)$ is defined by
  $$u(x)+v(\xi)=x \cdot \xi.$$

The image of the Lie group moment is isomorphic to $X/T^n\cong T^n$. We denote $P=[-\frac12,\frac 12]^n$ as the fundamental domain of $T^n$. Let $\underline{u}=\frac 12|x|^2.$ One can check that $u-\underline{u}$ is a periodic function in $\mathbb{R}^n$ with fundamental domain $P$.

  A calculation in \cite{A1} gives
 $$ S=-\sum_{i,j}\frac{\partial^2 u^{ij}(x)}{\partial x_i\partial x_j}$$
 which is called Abreu's equation, i.e., the expression of scalar curvature under symplectic coordinates. Notice that in our case, the average of $S$ is $0$, thus we can rewrite
  the Calabi flow in terms of Abreu's equation as
  \begin{equation}
\frac{\partial u}{\partial t} = \sum_{i,j}\frac{\partial^2 u^{ij}(x)}{\partial x_i\partial x_j}
\end{equation}
where we use the fact that
$\frac {\partial v(t,\xi)}{\partial t}=-\frac{ \partial u(t,x)}{\partial t}$ \cite{Gu1}. In fact, by the proof of the short time existence of the Calabi flow in \cite{ChenHe}, if the initial metric is in $\mathcal H_{T^n}$, then the Calabi flow will stay in $\mathcal H_{T^n}$ for a short time.

\section{Calabi flow and $M$-condition}
First, we want to introduce Donaldson's $M$-condition which is crucial in controlling the injectivity radius.

For any line segment $\overline{p_0  p_3} \subset P$, let $p_1, p_2 \in P$ be two points in $P$ such that the lengths of $\overline{p_0  p_1}, \overline{p_1  p_2}, \overline{p_2  p_3}$ are the same. Let $\nu$ be the unit vector parallel to the vector $p_3 - p_0$.  We say that $u$ satisfies the $M$-condition on  $\overline{p_0 p_3}$ if
$$
|\nabla_\nu u(p_1) - \nabla_\nu u(p_2)| < M.
$$

\begin{Def}
If for any line segment $l \subset P$, $u$ satisfies the $M$-condition on $l$, then we say that $u$ satisfies the $M$-condition on $P$.
\end{Def}

The goal of this section is to prove the following proposition:

\begin{prop}
\label{Calabi-M}
The $M$-condition is preserved under the Calabi flow.
\end{prop}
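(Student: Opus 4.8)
The plan is to establish a maximum principle for the quantity that controls the $M$-condition. Since the symplectic potential $u$ is convex on $\mathbb{R}^n$, for an admissible segment $\overline{p_0p_3}\subset P$ with trisection points $p_1,p_2$ and $\nu=(p_3-p_0)/|p_3-p_0|$ we have
\[
\nabla_\nu u(p_2)-\nabla_\nu u(p_1)=\int_{p_1}^{p_2}u_{\nu\nu}\,ds\ge 0,
\]
so the $M$-condition is equivalent to $\Phi(p_0,p_3):=\nabla_\nu u(p_2)-\nabla_\nu u(p_1)<M$ for every such segment. I would study $\Phi^{*}(t):=\sup\Phi$ and prove it is non-increasing along the Calabi flow, which is exactly the assertion, since then $\Phi^{*}(t)\le\Phi^{*}(0)<M$. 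A preliminary point is that the supremum is attained: on the abelian variety $u-\underline u$ is a globally defined smooth $\mathbb{Z}^n$-periodic function, $\Phi$ is invariant under lattice translations of the segment, and the relevant space of configurations is compact (degenerate segments $p_0=p_3$ contributing $0$); in particular there is no degeneration at $\partial P$ to contend with.

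Next I would differentiate. Writing Abreu's equation with vanishing average as $\partial_t u=-S$, a fixed configuration satisfies
\[
\frac{\partial}{\partial t}\Phi=\nabla_\nu(\partial_t u)(p_2)-\nabla_\nu(\partial_t u)(p_1)=-\bigl(\nabla_\nu S(p_2)-\nabla_\nu S(p_1)\bigr)=-\int_{p_1}^{p_2}S_{\nu\nu}\,ds.
\]
Fix a time $t_0$ and a configuration $(p_0^{*},p_3^{*})$ realizing $\Phi^{*}(t_0)$; the standard comparison bounds the upper right derivative of $\Phi^{*}$ at $t_0$ by $\partial_t\Phi$ evaluated at this configuration. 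Hence it suffices to prove that at a maximizing configuration
\[
\int_{p_1^{*}}^{p_2^{*}}S_{\nu^{*}\nu^{*}}\,ds=\nabla_{\nu^{*}}S(p_2^{*})-\nabla_{\nu^{*}}S(p_1^{*})\ge 0 .
\]

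To exploit maximality I would take the first and second variations of $\Phi$ as the maximizing segment is translated and rotated. Translating forces the Hessian identity $\nabla^2 u(p_1^{*})\nu^{*}=\nabla^2 u(p_2^{*})\nu^{*}$ (so $u_{\nu^{*}\nu^{*}}$ takes equal values at $p_1^{*}$ and $p_2^{*}$); rotating $\nu^{*}$ gives a relation between $\nabla u(p_2^{*})-\nabla u(p_1^{*})$ and $\nu^{*}$; and the second variation under translation yields a one-sided bound on the third derivatives of $u$ along the segment at the endpoints. The remaining step is to combine these conditions with Abreu's equation $S=-\sum_{i,j}\partial_i\partial_j u^{ij}$ and the strict convexity $\nabla^2 u>0$ to deduce the displayed inequality, and I expect this to be the main obstacle: it is not a naive maximum principle, since the variations of $\Phi$ only control derivatives of $u$ up to third order while $\nabla_\nu S$ is of one higher order, so one must also invoke the evolution (equivalently, the elliptic structure) of the inverse Hessian $u^{ij}$ along the flow. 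The abelian hypothesis is essential precisely here: because $u-\underline u$ is smooth and periodic, the boundary of the fundamental domain $P$ is not a genuine boundary, so the maximizing segment may be translated freely and one has full interior regularity of $u$, avoiding the Guillemin-type boundary analysis that makes the compact toric case delicate. Granting the inequality, $\partial_t\Phi^{*}\le 0$ and therefore $\Phi^{*}(t)\le\Phi^{*}(0)$, i.e. the $M$-condition is preserved under the Calabi flow.
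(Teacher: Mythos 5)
Your argument has a genuine gap at exactly the point you flag: the inequality $\nabla_{\nu^*}S(p_2^*)-\nabla_{\nu^*}S(p_1^*)\ge 0$ at a maximizing configuration is the entire content of the claimed monotonicity of $\Phi^*$, and nothing you derive supplies it. The first and second variations of $\Phi$ at a maximizer control derivatives of $u$ only up to third order, while $\nabla_\nu S$ involves fifth derivatives of $u$; there is no algebraic route from the former to the latter. More fundamentally, the Calabi flow is a fourth-order parabolic equation, and such equations do not satisfy pointwise maximum principles, so there is no reason to expect $\sup\Phi$ to be non-increasing at all. You have correctly identified the obstacle but not overcome it, so the proof is incomplete, and the strategy itself is doubtful.

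The paper avoids this entirely and proves something slightly different (and sufficient for its purposes): not that the initial constant $M$ persists, but that there is a single uniform constant $M$, depending only on the initial data, for which the $M$-condition holds at all times. The route is elementary: an integration-by-parts computation (with boundary terms killed by the periodicity of $u-\underline{u}$) shows that $\int_P(u-\underline u)^2\,dx$ is non-increasing along the flow, since the resulting integrand is $\sum_i(\lambda_i-1)(\lambda_i^{-1}-1)\le 0$ for the eigenvalues $\lambda_i>0$ of $(u_{ij})$. Convexity then upgrades the uniform $L^2$ bound on $u$ to an $L^\infty$ bound on a slightly larger periodic domain (a supporting-plane argument), and hence to a uniform bound on $|Du|$ in $P$. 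A uniform gradient bound makes the $M$-condition automatic, since $|\nabla_\nu u(p_1)-\nabla_\nu u(p_2)|\le 2\sup_P|Du|$. If you want to salvage your approach, note that the weaker statement with a possibly larger uniform $M$ is all that is used later in the paper, so a monotonicity formula for an integral quantity (which the fourth-order structure does respect, via integration by parts) is the natural tool, rather than a pointwise one.
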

To achieve this goal, we prove $C^0$ and $C^1$ bounds of the solution of the Calabi flow. Thus, there is a uniform constant $M$ such that the $M$-condition holds for all time.

In Calabi and Chen's work \cite{CC}, they show that the Calabi flow decreases the distance. We will reproduce this result in our settings. 

\begin{prop}
The $L^2$ norm of $u(t, x)-\underline{u}$ is decreasing along the Calabi flow.
\end{prop}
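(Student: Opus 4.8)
The plan is to differentiate $\int_P \big(u(t,x)-\underline{u}\big)^2\,dx$ in time and integrate by parts over $P=[-\frac12,\frac12]^n$. Set $w=u-\underline{u}$; it is smooth and periodic with fundamental domain $P$, and since $\underline{u}=\frac12|x|^2$ we have $w_{ij}=u_{ij}-\delta_{ij}$ and $\partial_t\underline{u}=0$. Using the Calabi flow equation $\partial_t u=\sum_{i,j}\partial_i\partial_j u^{ij}$,
$$\frac{d}{dt}\int_P w^2\,dx = 2\int_P w\sum_{i,j}\frac{\partial^2 u^{ij}}{\partial x_i\partial x_j}\,dx.$$
Because the flow stays in $\mathcal{H}_{T^n}$, the Hessian $(u_{ij})$ is positive definite, and it and its inverse $u^{ij}$ are periodic, hence so is $\partial_i\partial_j u^{ij}$; integrating by parts twice over the closed torus $P$ therefore produces no boundary terms and gives
$$\frac{d}{dt}\int_P w^2\,dx = 2\int_P \sum_{i,j} w_{ij}\,u^{ij}\,dx = 2\int_P \sum_{i,j}(u_{ij}-\delta_{ij})u^{ij}\,dx = 2\int_P\big(n-\mathrm{tr}(u^{ij})\big)\,dx,$$
where I used $\sum_{i,j}u_{ij}u^{ij}=n$ and $\sum_{i,j}\delta_{ij}u^{ij}=\mathrm{tr}(u^{ij})$.

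It then remains to show $\int_P \mathrm{tr}(u^{ij})\,dx\ge n$. First, for each $i$, periodicity of $w$ gives $\int_P u_{ii}\,dx=\int_P(1+w_{ii})\,dx=\mathrm{vol}(P)=1$, so $\int_P \mathrm{tr}(u_{ij})\,dx=n$. Next, applying the Cauchy-Schwarz inequality to the eigenvalues of the positive definite matrix $(u_{ij})$ yields the pointwise bound $\mathrm{tr}(u_{ij})\cdot\mathrm{tr}(u^{ij})\ge n^2$, i.e.\ $\mathrm{tr}(u^{ij})\ge n^2/\mathrm{tr}(u_{ij})$. Integrating this and using Cauchy-Schwarz once more in the form $\int_P f^{-1}\,dx\ge \frac{\mathrm{vol}(P)^2}{\int_P f\,dx}$ with $f=\mathrm{tr}(u_{ij})$ gives
$$\int_P \mathrm{tr}(u^{ij})\,dx \ge n^2\int_P \frac{dx}{\mathrm{tr}(u_{ij})} \ge \frac{n^2\,\mathrm{vol}(P)^2}{\int_P \mathrm{tr}(u_{ij})\,dx} = \frac{n^2}{n} = n.$$
Hence $\frac{d}{dt}\int_P w^2\,dx\le 0$, which is the assertion.

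The estimates here are elementary, so I do not anticipate a genuine obstacle; the only care needed is bookkeeping. One should note that the flow remaining in $\mathcal{H}_{T^n}$ (from the short-time existence recalled in Section~\ref{tori}) is exactly what makes $(u_{ij})$ positive definite and $u^{ij}$ smooth and periodic, and that every integration by parts is genuinely over the closed torus $P$ so that contributions from opposite faces cancel. The one substantive input is the pair of applications of Cauchy-Schwarz --- once at the matrix level, once at the level of integrals over $P$ --- which together turn the normalization $\int_P \mathrm{tr}(u_{ij})\,dx=n$ into the desired lower bound for $\int_P \mathrm{tr}(u^{ij})\,dx$.
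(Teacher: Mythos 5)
Your computation is correct, and the skeleton --- differentiate in $t$, use $\partial_t u = u^{ij}_{\ ij}$, and integrate by parts twice over the closed torus so that no boundary terms appear --- is exactly the paper's. Where you genuinely diverge is the concluding sign argument. The paper inserts the (identically vanishing) term $\underline{u}^{ij}_{\ ij}$ before integrating by parts, so its final integrand is the symmetric expression $\sum_{i,j}(u^{ij}-\underline{u}^{ij})(u_{ij}-\underline{u}_{ij})=\sum_i(\lambda_i-1)(\lambda_i^{-1}-1)$, which is \emph{pointwise} nonpositive by inspecting each eigenvalue. You keep the asymmetric integrand $\sum_{i,j}u^{ij}w_{ij}=n-\mathrm{tr}(u^{ij})$, which is not pointwise nonpositive (take all $\lambda_i$ large), and so you are forced into a global argument: the normalization $\int_P\mathrm{tr}(u_{ij})\,dx=n$ coming from periodicity of $w$, followed by Cauchy--Schwarz once at the eigenvalue level ($\mathrm{tr}(u_{ij})\,\mathrm{tr}(u^{ij})\ge n^2$) and once at the integral level ($\int f^{-1}\ge \mathrm{vol}(P)^2/\int f$). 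All of these steps check out, and indeed your integrand agrees with the paper's after integration precisely because $\int_P\mathrm{tr}(u_{ij})\,dx=n$. The trade-off: the paper's symmetrization is more economical --- one line, pointwise, no extra normalization --- while your route needs the additional input from periodicity and delivers only the integrated (not pointwise) nonpositivity, which is still exactly what the proposition requires.
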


\begin{proof}
Let $\nu$ be the outward normal vector along the boundary of $P$ and $ds$ be the boundary measure. We have
\begin{eqnarray*}
& &\frac{\partial}{\partial t} \int_P (u(t,x)-\underline{u})^2 \ dx \\
&=& 2 \int_P (\underline{S}-S(t)) (u(t,x)-\underline{u}) dx \\
&=& 2 \int_P \left(u^{ij}_{\ ij}(t,x) - \underline{u}^{ij}_{\ ij} \right) (u(t,x)-\underline{u}) \ dx \\
&=& 2 \int_{\partial P} \left(u^{ij}_{\ i}(t,x) - \underline{u}^{ij}_{\ i} \right)\nu_j (u(t,x)-\underline{u}) \ ds - 2 \int_P \left(u^{ij}_{\ i}(t,x) - \underline{u}^{ij}_{\ i} \right) (u_j(t,x)-\underline{u}_j) \ dx \\
&=& - 2 \int_P \left(u^{ij}_{\ i}(t,x) - \underline{u}^{ij}_{\ i} \right) (u_j(t,x)-\underline{u}_j) \ dx \\
&=& -2 \int_{\partial P} \left(u^{ij}(t,x) - \underline{u}^{ij} \right) \nu_i (u_j(t,x)-\underline{u}_j) \ ds + 2\int_P \left(u^{ij}(t,x) - \underline{u}^{ij} \right) (u_{ij}(t,x) - \underline{u}_{ij}) \ dx \\
&=& 2\int_P \left(u^{ij}(t,x) - \underline{u}^{ij} \right) (u_{ij}(t,x) - \underline{u}_{ij}) \ dx \\ &\leq & 0.
\end{eqnarray*}

The boundary integrals vanish due to the fact that $u-\underline{u}$ is a periodic function on $\mathbb{R}^n$ with fundamental domain $P$. For the last step, notice that
$$
\sum_{ij} (u^{ij}(t,x) - \underline{u}^{ij}) (u_{ij}(t,x) - \underline{u}_{ij}) = Trace ( (u^{ij}(t,x) - I) (u_{ij}(t,x) - I)).
$$
Thus at each point we can choose an orthonormal basis such that $(u_{ij}(t,x)) = diag (\lambda_1, \ldots, \lambda_n)>0$ since $u$ is convex, then
$$
\sum_{ij} (u^{ij}(t,x) - \underline{u}^{ij}) (u_{ij}(t,x) - \underline{u}_{ij}) = \sum_{i} (\lambda_i -1 ) (\lambda_i^{-1} -1) \leq 0.
$$
\end{proof}

As an immediate corollary, we have
\begin{cor}
The $L^2$ norm of $u(t,x)$ depends only on the initial metric and is bounded independent of $t$.
\end{cor}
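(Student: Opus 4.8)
The plan is to deduce this directly from the preceding proposition by splitting off the fixed function $\underline u$. Write $u(t,x) = \bigl(u(t,x) - \underline u\bigr) + \underline u$ on the fundamental domain $P$, so that the triangle inequality in $L^2(P)$ gives
$$
\|u(t,x)\|_{L^2(P)} \le \|u(t,x) - \underline u\|_{L^2(P)} + \|\underline u\|_{L^2(P)}.
$$
The second term on the right is an absolute constant, namely $\bigl\| \tfrac12 |x|^2 \bigr\|_{L^2(P)}$, since the fundamental domain $P = [-\tfrac12,\tfrac12]^n$ is fixed once and for all. For the first term, the previous proposition shows that $t \mapsto \int_P (u(t,x) - \underline u)^2\, dx$ is non-increasing along the Calabi flow; recall that the proof of that proposition uses precisely that $u - \underline u$ is periodic with fundamental domain $P$, which is what makes the boundary integrals over $\partial P$ vanish. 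Hence $\|u(t,x) - \underline u\|_{L^2(P)} \le \|u(t_0,x) - \underline u\|_{L^2(P)}$ for all $t \ge t_0$, where $t_0$ denotes the initial time, and the right-hand side is determined by the initial metric alone.

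Combining the two estimates yields a bound on $\|u(t,x)\|_{L^2(P)}$ that is independent of $t$ and depends only on the initial data, which is the assertion. There is essentially no obstacle here: all the analytic content is already contained in the monotonicity just established, and the corollary is merely its restatement after absorbing the fixed quadratic $\underline u$. The only point worth flagging is that it is genuinely $u - \underline u$, and not $u$ itself, that is periodic on $\mathbb R^n$ (as recorded in Section~\ref{tori}), so the integration-by-parts argument cannot be applied to $\|u\|_{L^2(P)}$ directly; routing through the triangle inequality is the clean way to circumvent this.
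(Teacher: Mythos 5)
Your proof is correct and follows essentially the same route as the paper: both split $u$ as $(u-\underline u)+\underline u$, bound the first piece by the monotonicity from the preceding proposition and the second as a fixed constant depending only on $P$. The paper uses the pointwise inequality $(a+b)^2\le 2(a^2+b^2)$ where you use the triangle inequality in $L^2$, which is an immaterial difference.
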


\begin{proof}
\begin{eqnarray*}
& & \int_P u^2(t,x) \ dx
\leq 2 \int_P \underline{u}^2 + (u(t,x)-\underline{u})^2 \ dx \leq 2 \int_P \underline{u}^2 + (u(0)-\underline{u})^2 \ dx \leq C_0.
\end{eqnarray*}
\end{proof}

Then we have the following observation:
\begin{prop}
The $L^\infty$ norm of $u(t,x)$ in $P$ depends only on the initial metric and is bounded independent of $t$.
\end{prop}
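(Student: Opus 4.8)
The plan is to upgrade the $L^2$ control just obtained to an $L^\infty$ bound by exploiting two structural features: each $u(t,\cdot)$ is convex (being the Legendre transform of the strictly convex local potential $v(t,\cdot)$, which stays convex as long as the flow remains in $\mathcal H_{T^n}$), and $u(t,\cdot)-\underline u$ is $\Z^n$-periodic. The point is that convexity together with periodicity forces a \emph{universal} Lipschitz bound on $u(t,\cdot)-\underline u$, after which the $L^2$ bound only has to pin down an additive constant.

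First I would record the identity coming from periodicity. Writing $w(t,x):=u(t,x)-\underline u(x)$ and using $\underline u=\tfrac12|x|^2$ together with $w(t,x+e_j)=w(t,x)$ for each standard basis vector $e_j$, one gets
$$u(t,x+e_j)-u(t,x)=\underline u(x+e_j)-\underline u(x)=x_j+\tfrac12,\qquad u(t,x)-u(t,x-e_j)=x_j-\tfrac12 .$$
Since $u(t,\cdot)$ is a smooth convex function on all of $\R^n$, the first-order convexity inequalities $u(t,x\pm e_j)\ge u(t,x)\pm\d_{x_j}u(t,x)$ applied to these two identities give $x_j-\tfrac12\le \d_{x_j}u(t,x)\le x_j+\tfrac12$, i.e. $|\nabla u(t,x)-x|\le \tfrac{\sqrt n}{2}$, hence $|\nabla_x w(t,x)|\le \tfrac{\sqrt n}{2}$ for all $x\in\R^n$ and all $t$. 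In particular the oscillation of $w(t,\cdot)$ over $P$ is at most $\tfrac{\sqrt n}{2}\,\mathrm{diam}(P)=\tfrac n2$, with a constant independent of $t$.

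Next I would feed in the preceding corollary: $\int_P u^2(t,x)\,dx\le C_0$ with $C_0$ depending only on the initial metric, so by Cauchy--Schwarz (recall $|P|=1$) one has $\big|\int_P u(t,x)\,dx\big|\le \sqrt{C_0}$, and subtracting the fixed finite quantity $\int_P\underline u\,dx=\tfrac n{24}$ yields $\big|\int_P w(t,x)\,dx\big|\le C_1:=\sqrt{C_0}+\tfrac n{24}$. Hence there is a point $x_\ast\in P$ with $|w(t,x_\ast)|\le C_1$, and combining with the oscillation bound gives $\|w(t,\cdot)\|_{L^\infty(P)}\le C_1+\tfrac n2$. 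Therefore
$$\|u(t,\cdot)\|_{L^\infty(P)}\le \|w(t,\cdot)\|_{L^\infty(P)}+\|\underline u\|_{L^\infty(P)}\le C_1+\tfrac n2+\tfrac n8 ,$$
a bound depending only on $n$ and on the initial metric.

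The only thing requiring a little care — though it is not really an obstacle — is ensuring the convexity/periodicity manipulation is legitimate pointwise, which needs $u(t,\cdot)$ to be a finite (smooth) convex function on the whole of $\R^n$; this is guaranteed by the standing assumption that the flow stays in $\mathcal H_{T^n}$, so that $v(t,\cdot)$ is a genuine strictly convex \kahler potential and $u(t,\cdot)$ is its Legendre dual. Everything else is elementary, and it is worth emphasizing that the Lipschitz constant $\tfrac{\sqrt n}{2}$ is universal, so the only dynamical input beyond convexity and periodicity is the $t$-independent $L^2$ bound already established.
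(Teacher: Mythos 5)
Your argument is correct, and it reaches the conclusion by a genuinely different route from the paper. The paper works with $u$ itself on the enlarged cubes $[-2,2]^n$ and $[-3,3]^n$: it uses the supporting-plane/area trick for convex functions (a putative large maximum at a vertex forces a region of unit area where $u$ is large, and a very negative interior minimum together with the boundary upper bound forces a region where $u$ is very negative) to convert the $t$-independent $L^2$ bound directly into pointwise upper and lower bounds, and only afterwards deduces a gradient bound from convexity. You instead observe that periodicity of $w=u-\underline u$ combined with the supporting-hyperplane inequality at unit lattice translates pins the gradient: $x_j-\tfrac12\le \d_{x_j}u(x)\le x_j+\tfrac12$, hence $|\nabla w|\le\tfrac{\sqrt n}{2}$ universally, so the oscillation of $w$ on $P$ is at most $\tfrac n2$ and the $L^2$ bound is needed only to control the mean $\int_P w$. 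Both proofs are complete; yours buys explicit, dimension-only constants for the Lipschitz part (independent even of the initial data), makes the role of the $L^2$ input transparent (it only fixes an additive constant), and delivers the subsequent corollary on $|\nabla u|$ in $P$ for free, whereas the paper's route is the more standard convexity-versus-integral-norm argument and does not require identifying the exact affine behavior of $u$ under lattice translation. The one hypothesis you rightly flag --- that $u(t,\cdot)$ is a finite smooth convex function on all of $\R^n$ --- does hold here, since $\nabla v(\xi)=\xi+\nabla\phi(\xi)$ is a proper local diffeomorphism of $\R^n$ (as $\phi$ is periodic and $D^2v>0$), hence a global diffeomorphism, so the Legendre dual $u$ is globally defined.
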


\begin{proof}
By the periodicity of $u(t,x) - \underline{u}(x)$, we can do the estimates in larger domains than $P$: we control the upper bound of $u(t,x)$ in $[-2,2]^n$ and the lower bound of $u(t,x)$ in $[-1,1]^n$. Thus we are able to control the gradient of $u(t,x)$ in $P$.

 Notice that the $L^2$ norm of $u(t,x)$ in domain $[-3,3]^n$ is bounded $C_1$ which depends only on the initial metric and is independent of $t$. Let us temporarily suppress the $t$ variable, we will write $u(t,x)$ as $u(x)$ in the proof. First, we prove that $u(x)$ has an upper bound. Suppose not, then the maximum of $u(x)$ in $[-2,2]^n$ reaches at one of its vertices. Without loss of generality, we can assume that the maximum of $u(x)$ is reached at $(2, \ldots, 2)$. We consider the supporting plane of $u(x)$, i.e. $ l(x)$, at $(2, \ldots, 2)$. Then it is easy to see that there is an area larger than 1 such that the value of $l(x)$ in this area is greater or equal to its value at $(2, \ldots, 2)$. Since the value of $u(x)$ is always greater than $l(x)$, we conclude that the $L^2$ norm of $u(x)$ is greater than $C_1$. A contradiction.

Next we show that $u(x)$ is bounded from below in $[-1,1]^n$. Let $x$ be the point in $[-1,1]^n$ reaching the minimum of $u(x)$ in $[-1,1]^n$.  Notice that $u(x)$ is bounded from above along the boundary of $[-2,2]^n$. If $u(x)$ is very negative, then there is an area larger than $2^n$ such that the value of $u(x)$ in this area is less than $u(x)/3$. It also contradicts to the fact that the $L^2$ norm of $u(x)$ is bounded by $C_1$.
\end{proof}

Since $u$ is convex, we have

\begin{cor}
The derivative of $u(t,x)$ with respect to $x$ in $P$ is  bounded by $C_2$ which depends only on the initial metric and is  independent of $t$.
\end{cor}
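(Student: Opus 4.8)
The plan is to obtain the gradient bound as a direct consequence of the uniform $L^\infty$ estimates from the preceding proposition, combined with the convexity of $u(t,\cdot)$. I recall that the proof of that proposition in fact delivers more than its statement: exploiting the periodicity of $u-\underline{u}$, it produces a uniform upper bound $u(t,\cdot)\le A$ on the enlarged cube $[-2,2]^n$ and a uniform lower bound $u(t,\cdot)\ge -B$ on $[-1,1]^n$, with $A$ and $B$ depending only on the initial metric and independent of $t$.

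Then, fixing $t$, a point $x\in P=[-\frac12,\frac12]^n$, and an arbitrary unit vector $\nu\in\R^n$, I would observe that $x\in[-1,1]^n$ while $x+\nu$ and $x-\nu$ both lie in $[-\frac32,\frac32]^n\subset[-2,2]^n$, so the bounds above apply at all three points. Since $u(t,\cdot)$ is convex, the supporting hyperplane at $x$ gives $\nabla_\nu u(t,x)\le u(t,x+\nu)-u(t,x)$ and $-\nabla_\nu u(t,x)\le u(t,x-\nu)-u(t,x)$; inserting the $L^\infty$ bounds yields $|\nabla_\nu u(t,x)|\le A+B$. As $\nu$ is arbitrary, this gives $|\nabla u(t,x)|\le A+B=:C_2$ on $P$, uniformly in $t$ and depending only on the initial metric.

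The argument is elementary and presents no real obstacle; the only point requiring care is the bookkeeping of the cubes, namely that translating any point of $P$ by a unit vector in an arbitrary direction keeps it inside the enlarged domains on which the $L^\infty$ control is available. This is precisely why the previous estimates were carried out on $[-2,2]^n$, $[-1,1]^n$ and $[-3,3]^n$ rather than on $P$ alone.
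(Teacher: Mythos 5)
Your argument is correct and is precisely the one the paper intends: the preceding proposition's proof explicitly sets up the uniform upper bound on $[-2,2]^n$ and lower bound on $[-1,1]^n$ so that convexity yields the gradient bound on $P$, which is all the paper's one-line proof (``Since $u$ is convex\dots'') is invoking. You have merely written out the supporting-hyperplane inequalities that the paper leaves implicit.
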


Now we give a proof of Proposition (\ref{Calabi-M}).
\begin{proof}
It is easy to see that if the derivative of $u$ is bounded, then the $M$-condition holds automatically by its definition.
\end{proof}

\section{Non-Collapsing}
In this section, we apply Donaldson's estimates and the regularity theorem to obtain the lower bound of the injectivity radius in Theorem (\ref{non-collapsing}). Since the estimates are done in $t=0$, we will write $u(x)$ instead of $u(0,x)$ for convenience. Also we write $P$ instead of $\lambda P$ in this section.

Notice that although the evolution equation of the Riemann curvature operator of the Calabi flow is expressed in terms of bisectional curvatures, Streets' estimates which are done in terms of sectional curvatures can still go through:

\begin{enumerate}
\item The integral estimates in \cite{St2} can go through because
$$
\frac{1}{2} \triangle_d = \triangle_{\partial} = \triangle_{\bar{\partial}}
$$
in K\"ahler manifolds.

\item To do the analysis in \cite{St}, we not only need to lift up the metric $g$ to the tangent bundle at $x$, i.e. $T_x X$, but also we need to lift up the holomorphic structure $J$ and the K\"ahler form $\omega$. This can also be done.
\end{enumerate}

Donaldson proves the following lemma in dimension 2. With some modifications, the second named author generalizes Donaldson's result to any dimension.

\begin{lemma}
If the square of the Riemannian curvature norm $$|Rm|^2(x) = \sum_{i,j,k,l} u^{ij}_{\ kl} u^{kl}_{\ ij}(x) \leq 1$$ pointwisely and there is a constant $M$ such that $u$ satisfies the $M$-condition, then for any point $x \in P$, we have
$$
\left(u_{ij}(x) \right) < C I_n,
$$
where $C$ is a constant depending only on $M$.
\end{lemma}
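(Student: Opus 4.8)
The plan is to follow Donaldson's interior Hessian estimate for toric surfaces and the second author's extension of it to all dimensions. The mechanism rests on two ingredients pulling in the same direction: the $M$-condition is, once unwound, an \emph{averaged} upper bound for the second derivatives of $u$, while the curvature bound $|Rm|\le1$ supplies just enough regularity of the Hessian to promote that averaged bound to a pointwise one. First I would reduce the statement: since $(u_{ij})>0$ it suffices to bound $u_{\nu\nu}(x):=\nu^i\nu^j u_{ij}(x)$ uniformly over unit vectors $\nu$, and in fact only when $\nu$ is the top eigendirection of the Hessian at $x$. Since $u-\underline u$ is periodic with fundamental domain $P$, the Hessian $(u_{ij})$ — and hence every $u_{\nu\nu}$, and the function $\lambda_{\max}(u_{ij}(\cdot))$ — is a periodic (continuous) function on $\R^n$; so one may work on all of $\R^n$, where every point is the midpoint of a segment of a fixed length in every direction and there is no boundary to contend with.

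The first step is to extract a line-integral bound from the $M$-condition. Fix $p\in\R^n$ and a unit vector $\nu$, and apply the $M$-condition to the segment of a fixed length $\ell_0$ centred at $p$ in the direction $\nu$. Convexity of $u$ gives $u_{\nu\nu}\ge0$ and $\nabla_\nu u(p_2)-\nabla_\nu u(p_1)=\int_{p_1}^{p_2}u_{\nu\nu}\,ds$ along the middle third, so
\[
\int_{-\ell_0/6}^{\ell_0/6} u_{\nu\nu}(p+s\nu)\,ds<M .
\]
Thus the one-variable function $f(s):=u_{\nu\nu}(p+s\nu)\ge0$ satisfies $\int_{-\ell_0/6}^{\ell_0/6}f<M$: if $f(0)=u_{\nu\nu}(p)$ were enormous, $f$ would be forced to collapse from an enormous value down to size $O(M/\ell_0)$ across a very short interval about $0$.

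The second step rules out that collapse using $|Rm|^2=\sum u^{ij}_{\ kl}u^{kl}_{\ ij}\le1$, which controls the second derivatives of the co-metric $(u^{ij})$. Take $\nu$ to be the top eigendirection at $p$ and write $\mu:=\lambda_{\max}(u_{ij}(p))$. Then $u^{\nu\nu}=\nu^i\nu^j u^{ij}$ is nonnegative, equals $1/\mu$ at $p$, and has bounded second derivative along the $\nu$-line; the elementary fact that a nonnegative function with bounded second derivative which is small at a point also has small first derivative there bounds $\partial_\nu u^{\nu\nu}(p)$, and differentiating $(u_{ij})=(u^{ij})^{-1}$ converts this into $|u_{\nu\nu\nu}(p)|\le C\mu^{3/2}$, with analogous bookkeeping controlling $u_{\nu\nu\nu\nu}$ near $p$. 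Plugging this into a Taylor expansion, $f$ stays $\gtrsim\mu/2$ on an interval of length $\gtrsim\mu^{-1/2}$ about $0$, so $\int_{-\ell_0/6}^{\ell_0/6}f\gtrsim\mu^{1/2}$, contradicting the first step once $\mu$ exceeds a constant depending only on $M$ (and on the dimension and $\ell_0$). To sidestep the apparent circularity — reading the curvature bound off at a point where the Hessian is a priori uncontrolled — one carries the argument out at a point $p_*$ where the periodic function $\lambda_{\max}(u_{ij})$ attains its maximum, so that only one-sided information near $p_*$ is used; alternatively one argues by continuity, using that $\{\lambda_{\max}(u_{ij})<A\}$ is open, is nonempty for $A>n$ because $\int_P \operatorname{tr}(u_{ij})\,dx=n|P|$ forces $\operatorname{tr}(u_{ij})\le n$ somewhere, and is closed by the estimate just sketched.

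The main obstacle is precisely this second step. When $|Rm|$ is written out it is a combination of fourth derivatives of $u$ with nonlinear lower-order terms, and extracting honest control of the oscillation of the Hessian from it — in particular handling the off-diagonal pieces that appear when one differentiates $(u^{ij})^{-1}$, which a priori seem to require a lower bound on the Hessian, the very thing one is after — calls for the precise algebraic form of $u^{ij}$ in terms of $u_{ij}$. This is the substance of Donaldson's two-dimensional argument, and the place where the second author's work verifies that the same mechanism, and the same role for the $M$-condition, persist in arbitrary dimension; the remaining steps are routine.
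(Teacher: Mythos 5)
The paper does not actually prove this lemma --- its ``proof'' is the citation to Lemma 4 of \cite{D3} and Lemma 4.4 of \cite{H1} --- and your sketch correctly reconstructs the mechanism of those cited arguments: the $M$-condition gives $\int u_{\nu\nu}\,ds<M$ over the middle third of every segment, while the curvature bound prevents $u_{\nu\nu}$ from dropping below $\lambda_{\max}/2$ on an interval of length $\gtrsim\lambda_{\max}^{-1/2}$, so the line integral grows like $\lambda_{\max}^{1/2}$ and a large eigenvalue is impossible. The one step you flag as the main obstacle --- turning the particular contraction $\sum u^{ij}_{\ kl}u^{kl}_{\ ij}\le1$ into pointwise control of $\partial_\nu^2 u^{\nu\nu}$ along the whole line, where the naive bound degenerates exactly where the Hessian does --- is indeed where all the substance of Donaldson's lemma lies, and since you defer it to the same references the paper itself cites, your write-up is consistent with (and no less complete than) the paper's.
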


\begin{proof}
See Lemma 4 in \cite{D3} and Lemma 4.4 in \cite{H1}.
\end{proof}

\begin{rmk}
Once we have the upper bound of $(D^2 u)$, we can give a proof of the regularity theorem from the weak regularity theorem, as shown in the Appendix A. 
\end{rmk}

The following lemma which obtains the lower bound of $(u_{ij})$ at one point is established by Donaldson in dimension 2. The second named author generalizes it to higher dimensions.
\begin{lemma}
\label{lower}
Suppose in $P$, the $L^\infty$ norm of Riemannian curvature $$|Rm|_{L^\infty} \leq 1 $$
and
$$
\int_P |Rm(x)|^n \ dx < C_1.
$$
If there is a point $x \in P$ such that $|Rm|(x) = 1$. Then the regularity theorem tells us that
$$
\left(u_{ij}(x) \right) > C_2 I_n,
$$
where $C_2$ depends only on $M, C_1$ and $n$.
\end{lemma}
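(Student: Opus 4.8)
The plan is to combine the upper bound $(u_{ij}(x)) < CI_n$ from the preceding lemma with the regularity theorem through a blow-up argument. Suppose the assertion fails: then there is a sequence of Calabi flows on $X$ satisfying the hypotheses (and, as in Theorem~\ref{non-collapsing}, existing on $[-1,0]$ with $|Rm|\le 1$) with fixed constants $M$ and $C_1$, together with points $x_m\in P$ for which, writing $u_m$ for the time-$0$ potential, $|Rm|_{u_m}(x_m)=1$ while the least eigenvalue $\epsilon_m$ of $(\partial_i\partial_j u_m(x_m))$ tends to $0$.

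I would first normalize. Write $u_m=\underline u+\phi_m$ with $\phi_m$ periodic, so that $\hess u_m\le C I_n$ (preceding lemma) and the bounds $|\nabla^k Rm(0,\cdot)|\le C(n,k)$ supplied by the regularity theorem hold on all of $\R^n$. Set $B_m=(\hess u_m(x_m))^{1/2}$ and $\hat u_m(y)=u_m(x_m+B_m^{-1}y)-\nabla u_m(x_m)\cdot B_m^{-1}y$. This is an affine change of the base coordinate composed with the subtraction of a linear function, so it leaves the K\"ahler metric, hence $|Rm|$ and every $|\nabla^k Rm|$, unchanged, while $\nabla\hat u_m(0)=0$, $\hess\hat u_m(0)=I_n$ and $|Rm|_{\hat u_m}(0)=1$. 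Feeding $\hess\hat u_m(0)=I_n$ into the regularity bounds --- and using $|Rm|\le 1$ together with the scale-invariant bound $\int_P|Rm|^n\,dx<C_1$ to keep the local geometry from degenerating --- one bootstraps uniform $C^\infty$ estimates for $\hat u_m$ on a fixed ball $B(0,\rho_0)$; in particular $\sup_{B(0,\rho_0)}|\nabla\hat u_m|\le C$ uniformly in $m$.

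The contradiction then comes from periodicity. The function $\hat u_m-\widehat{\underline u}_m$ is invariant under the lattice $\Gamma_m=B_m\Z^n$ (the image of the period lattice of $\phi_m$), and $\widehat{\underline u}_m$ is quadratic, so for every $v\in\Gamma_m$ the difference $\hat u_m(\cdot+v)-\hat u_m(\cdot)$ is affine in $y$ with constant gradient $(\hess u_m(x_m))^{-1}v$. Since the covolume of $\Gamma_m$ is $(\det\hess u_m(x_m))^{1/2}\le(\epsilon_m C^{n-1})^{1/2}\to 0$, Minkowski's theorem gives a nonzero $v_m=B_m k_m\in\Gamma_m$ with $k_m\in\Z^n$ and $|v_m|\to 0$; as $\langle(\hess u_m(x_m))^{-1}v_m,v_m\rangle=|k_m|^2\ge 1$, Cauchy--Schwarz forces $|(\hess u_m(x_m))^{-1}v_m|\ge 1/|v_m|\to\infty$. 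But once $|v_m|<\rho_0/2$ this same vector is, at every point of $B(0,\rho_0/2)$, the gradient of $\hat u_m(\cdot+v_m)-\hat u_m(\cdot)$, hence has norm at most $2\sup_{B(0,\rho_0)}|\nabla\hat u_m|\le 2C$. This contradiction shows $\epsilon_m\not\to 0$, which is the asserted bound $(u_{ij}(x))>C_2 I_n$ with $C_2=C_2(M,C_1,n)$.

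The step I expect to be the real obstacle is the second one: turning the purely Riemannian output of the regularity theorem into genuine $C^\infty$ estimates for the normalized potentials $\hat u_m$ on a ball of a definite size. This is where the quantitative hypotheses actually enter --- the bound $|Rm|\le 1$, the $M$-condition (through the upper Hessian bound), and the scale-invariant integral bound $C_1$ (through an $\epsilon$-regularity argument) --- and it is carried out in dimension $2$ by Donaldson (Lemma~4 in \cite{D3}) and in arbitrary dimension by the second named author (Lemma~4.4 in \cite{H1}).
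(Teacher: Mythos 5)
The paper itself gives no argument here beyond citing Proposition 11 of \cite{D3} and Section 5 of \cite{H1}, so the comparison is with that cited proof. Your first two steps are consistent with it: the normalization $\hat u_m(y)=u_m(x_m+B_m^{-1}y)-\nabla u_m(x_m)\cdot B_m^{-1}y$ with $B_m=(D^2u_m(x_m))^{1/2}$ is exactly Donaldson's, it does preserve all Riemannian invariants, and the passage from the Riemannian output of the regularity theorem to uniform Euclidean $C^\infty$ control of $\hat u_m$ on a ball of definite size is indeed the delicate point (it rests on the Harnack-type estimate $e^{-2d}(u_{ij}(x))\le (u_{ij}(y))\le e^{2d}(u_{ij}(x))$ of Lemma 7 in \cite{D3} --- the next lemma in this section --- bootstrapped from $D^2\hat u_m(0)=I_n$).

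The final contradiction, however, has a genuine gap. Your Minkowski argument takes the period lattice of $\phi_m$ to be $\Z^n$, so that $\Gamma_m=B_m\Z^n$ has covolume $(\det D^2u_m(x_m))^{1/2}\to 0$. But the lemma is stated on $\lambda P$ with an arbitrary rescaling factor $\lambda>1$ (``we write $P$ instead of $\lambda P$ in this section''), precisely because in Section 5 it is applied to the blown-up flows $u_i$ on $\lambda_i P$ with $\lambda_i\to\infty$; there the period lattice is $\lambda_i\Z^n$ and the relevant covolume is $\lambda_i^n\det B_m$, which need not be small, so Minkowski produces no short period vector and the contradiction evaporates. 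Note also that $C_2$ is required to depend only on $M,C_1,n$ and not on $\lambda$. The symptom of the problem is that your argument never actually uses the two hypotheses that drive the real proof: the total energy bound and the normalization $|Rm|(x)=1$. In the cited argument the contradiction comes instead from the energy: the anisotropic change of variables has Jacobian $\det B_m\to 0$, so
$$
\int_{B(0,R)}|Rm_{\hat u_m}|^n\,dy \;=\; \det B_m\int_{B_m^{-1}B(0,R)+x_m}|Rm_{u_m}|^n\,dx \;\le\; \det B_m\cdot C_1 \;\longrightarrow\; 0
$$
for every $R$, forcing the smooth limit $\hat u_\infty$ to be flat on all of $\R^n$, while the regularity theorem guarantees $C^4_{loc}$ convergence and hence $|Rm_{\hat u_\infty}|(0)=\lim|Rm_{\hat u_m}|(0)=1$ --- a contradiction. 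You should replace the periodicity/Minkowski step by this energy argument; the rest of your setup can be kept.
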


\begin{proof}
See Proposition 11 in \cite{D3} and Section 5 in \cite{H1}.
\end{proof}

By applying Donaldson's estimates, we can control the lower bound of $(u_{ij}(y))$ for any other point $y$.
\begin{lemma}
Suppose in the polytope $P$, $|Rm|_{L^\infty} \leq 1$. For any point $y \in P$, let the Riemannian distance between $x$ and $y$ be $d$.  We have
$$
(u_{ij}(y)) \geq \frac{1}{e^{2d}} (u_{ij}(x)).
$$
\end{lemma}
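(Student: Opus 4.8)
The plan is to integrate a first‑order differential inequality for the metric along a length‑minimising geodesic. Since $u-\underline u$ is periodic, the Riemannian metric $g=(u_{ij})$ is $\mathbb Z^n$‑periodic and hence descends to a smooth complete metric on the flat torus $\mathbb R^n/\mathbb Z^n$; lifting back to $\mathbb R^n$, choose a unit‑speed minimising $g$‑geodesic $\gamma\colon[0,d]\to\mathbb R^n$ from $x$ to $y$ (after replacing $y$ by an appropriate translate), where $d$ is the Riemannian distance.

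Fix an arbitrary $v\in\mathbb R^n$ and put
\[
 f(s)=\sum_{ij}u_{ij}(\gamma(s))\,v^iv^j=|v|_{g(\gamma(s))}^2>0 .
\]
The goal is the differential inequality $|f'(s)|\le 2f(s)$: integrating $\bigl|\tfrac{d}{ds}\log f\bigr|\le 2$ over $[0,d]$ gives $f(d)\ge e^{-2d}f(0)$, i.e.\ $u_{ij}(y)v^iv^j\ge e^{-2d}u_{ij}(x)v^iv^j$, and since $v$ is arbitrary this is exactly $(u_{ij}(y))\ge e^{-2d}(u_{ij}(x))$. Now $f'(s)=\sum_{ijk}u_{ijk}(\gamma(s))\dot\gamma^k v^iv^j$; since the third partials of $u$ are totally symmetric one has $u_{ijk}=2u_{kl}\Gamma^l_{ij}$, where $\Gamma$ is the Christoffel symbol of $g$, so $f'(s)=2\,g_{\gamma(s)}(\dot\gamma,\nabla_vv)$ with $v$ regarded as a constant coordinate vector field. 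By Cauchy--Schwarz and $|\dot\gamma|_g=1$ we get $|f'(s)|\le 2|\nabla_vv|_{g(\gamma(s))}$, so everything reduces to the pointwise estimate
\[
 |\nabla_vv|_g\le |v|_g^2 \qquad\text{for every }v\in\mathbb R^n,
\]
equivalently $|\nabla_vv|_g\le 1$ whenever $|v|_g=1$.

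This pointwise estimate is where the curvature bound enters, and it is the main obstacle. From $g(\nabla_vv,w)=\tfrac12\partial_w(|v|_g^2)$ one sees it is equivalent to the Riemannian gradient bound $|\nabla_g\log(|v|_g^2)|_g\le 2$, i.e.\ to the assertion that the positive function $p\mapsto u_{ij}(p)v^iv^j$ on $\mathbb R^n$ has a logarithm that is $2$‑Lipschitz for $d_g$. The mechanism is already visible when $n=1$: a positive function $w$ on $\mathbb R$ with $|w''|\le1$ satisfies $(w')^2\le 2w$ everywhere, since otherwise a Taylor expansion using the bound on $w''$ forces $w$ to become negative within bounded distance. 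In general one argues in the same spirit: along a straight line the second derivatives of $(u^{ij})$ (equivalently, after raising indices, of $(u_{ij})$) are controlled by the curvature through Abreu's equation and the full curvature norm $\sum u^{ij}_{\ kl}u^{kl}_{\ ij}\le 1$, while $(u_{ij})$ stays positive definite on all of $\mathbb R^n$ by periodicity; combining these controls the first derivatives of the metric, which is precisely $|\nabla_vv|_g\le|v|_g^2$. This is Donaldson's interior estimate; see Lemma~4 and Proposition~11 of \cite{D3} and the higher‑dimensional versions in \cite{H1}, which also underlie the two preceding lemmas.

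The hard part, then, is this pointwise first‑derivative bound: the curvature norm controls only a particular contraction of the second derivatives of the (inverse) metric, and only up to terms quadratic in the first derivatives, so extracting the clean inequality $|\nabla_vv|_g\le|v|_g^2$ with the correct constant requires the Taylor‑expansion/maximum‑principle argument of \cite{D3}, in which the global positivity of the metric over the periodic domain is used essentially. Granting that estimate, the passage to the exponential‑in‑distance comparison is the routine integration along the minimising geodesic carried out above.
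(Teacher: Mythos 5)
Your reduction is sound as far as it goes: the identity $u_{ijk}=2u_{kl}\Gamma^l_{ij}$ for the Hessian metric, the Cauchy--Schwarz step along a unit-speed minimising geodesic, and the observation that $|f'|\le 2f$ integrates to the stated $e^{-2d}$ comparison are all correct, and this is indeed the shape of Donaldson's argument. But the entire content of the lemma is the pointwise estimate you isolate, $|\nabla_v v|_g\le |v|_g^2$ (equivalently $|\nabla\log(u_{ij}v^iv^j)|_g\le 2$), and you neither prove it nor point to where it is proved. The paper's proof consists precisely of citing the statement that supplies this: Lemma~7 of \cite{D3} (together with the remark that the boundary-distance parameter $\alpha$ there can be sent to $\infty$ since the torus has no boundary). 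You instead cite Lemma~4 and Proposition~11 of \cite{D3}, which are different statements --- the upper bound on $(u_{ij})$ from the $M$-condition and the lower bound at a point of maximal curvature --- and are the inputs to the \emph{neighbouring} lemmas of this section, not to this one.

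Moreover, the heuristic you offer in place of a proof does not support the inequality you need. Your one-dimensional model ($w>0$, $|w''|\le 1$ implies $(w')^2\le 2w$) yields a bound of the form $|f'|\lesssim\sqrt{f}$, i.e.\ an additive, linear-in-distance control of $\sqrt{f}$, which permits $f$ to reach zero in finite distance and cannot produce the multiplicative $f(d)\ge e^{-2d}f(0)$. The actual mechanism in \cite{D3} is of a different nature (the gradient is measured in the metric $g=(u_{ij})$ itself, and the curvature hypothesis enters through the geometry of the torus-invariant metric, not through a naive Euclidean bound on second derivatives of $u_{ij}v^iv^j$ --- note also that $|Rm|^2=\sum u^{ij}{}_{kl}u^{kl}{}_{ij}$ controls a contraction of second derivatives of the \emph{inverse} Hessian only modulo terms cubic in the unbounded third derivatives). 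So the key differential inequality is a genuine gap: to close it you should either reproduce Donaldson's proof of his Lemma~7 (and its higher-dimensional version, Lemma~4.9 of \cite{H1}) or cite it as the paper does.
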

\begin{proof}
We apply Lemma 7 in \cite{D3}. Since we do not have boundaries in our case, we can let the boundary distance $\alpha$ go to $\infty$. Hence we obtain the result.
\end{proof}

Notice that if $(u_{ij})$ is bounded from above, then the geodesic distance is bounded above by the Euclidean distance multiplying a constant. Thus we obtain the lower bound of $(u_{ij})(y)$ depending on the Euclidean distance between $y$ and $x$.

Once we obtain the upper bound and lower bound of $(u_{ij})$ of points around $x$. Applying Donaldson's arguments in Lemma 11 of \cite{D3}, we see that the injectivity radius at $x$ is bounded from below. For reader's convenience, we repeat his arguments here.

\begin{proof}
{\em (Theorem (\ref{non-collapsing}))}.
Since we already control the upper bound and lower bound of $(D^2 u)$, we only need to control the injectivity radius of $P \times \mathbb{R}^n$. Moreover, we only need to consider cut points because the curvature is bounded. Applying Lemma 8 of \cite{D3} ( Lemma 4.9 of \cite{H1}), we conclude that the Euclidean metric can be comparable with the Riemannian metric. Then Lemma 10 of \cite{D3} shows that the injectivity radius at $x$ has a lower bound.
\end{proof}

\section{Singularity analysis}
Since we have controlled the $M$-condition along the Calabi flow, thus we are ready to prove Theorem (\ref{long}) by the blow-up analysis. First, we study the formation of singularities of the Calabi flow under the assumption that the total energy, i.e.,
$$
\int_P |Rm(t, x)|^n dx
$$
is uniformly bounded along the flow.

By Chen and He's result \cite{ChenHe}, we know that if the Calabi flow cannot extend over time $T$, then there is a sequence of times $t_i \rightarrow T$ and a sequence of  points $p_i \in P$ such that $|Ric|(t_i,p_i) \rightarrow \infty$. Since we are dealing with the global convergence, we also need to rule out the case that there is a sequence of points $(t_i, p_i)$ where $t_i$ may approach to $\infty$ such that $|Rm(t_i,p_i)|\rightarrow \infty$. We will prove this by the contradiction arguments.

Suppose not, then without loss of generality, we can assume that
$$
|Rm(t_i,p_i)| = \max_{t\leq t_i, p \in P} |Rm(t,p)|.
$$
Denote $\lambda_i = |Rm(t_i,p_i)|$, rescaling the original Calabi flow $u(t)$ by $\lambda_i$, i.e.,
\begin{eqnarray*}
P_i &=& \lambda_i P,\\
u_i(t,x) &=& \lambda_i u(\frac{t-T_i}{\lambda_i^2}, \frac{x-p_i}{\lambda_i}).
\end{eqnarray*}
Then we get a sequence of the Calabi flows $u_i(t)$ such that $|Rm_i(0,0)|=1$ and
$$
\max_{t \leq 0,\ p \in P_i} |Rm|(t,p) = 1.
$$
We try to show that a subsequence of the Calabi flow converges to a limiting Calabi flow and the limiting Calabi flow cannot exist. Then we can conclude that the Riemannian curvature is uniformly bounded along the Calabi flow.

We apply the regularity theorem now. Notice that the first Euclidean derivative of $u$ is unchanged by rescaling, thus the $M$-condition is preserved under the rescaling. Also the total energy is preserved under the rescaling because the contribution from the $|Rm|^n$ cancels with the contribution from the volume. Thus we obtain the non-collapsing property of $(P_i, u_i(0))$. By \cite{D2}, Abreu's equation can be rewritten as
$$
U^{ij} \left( \frac{1}{\det(u_{kl})} \right)_{ij} = - S,
$$
where $S$ is the scalar curvature. Notice that $(U^{ij})$ is an elliptic operator. In order to apply Shauder's estimates to control the derivatives of $u$, we need to control the derivatives of $S$ in the Euclidean sense. Using the regularity theorem, we can show that the scalar curvature $S$ has a uniform $C^k$ bound in the Euclidean sense.

\begin{prop}
Let $f$ be any smooth function on $X$ and be invariant under the torus action. Then

$$
|\partial^k f|_g^2 = \sum_{i_1,j_1,\ldots,i_k,j_k} u^{i_1 j_1} \cdots u^{i_k j_k}f_{i_1 \cdots i_k}f_{j_1 \cdots j_k}.
$$
\end{prop}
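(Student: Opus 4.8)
The plan is to transfer everything to the symplectic coordinate $x=\nabla v(\xi)$ and compute the holomorphic covariant derivative $\partial^k f$ by hand, using only the Legendre--duality relations of Section~\ref{tori}. Two elementary facts drive the computation. First, a $T^n$-invariant function is independent of $\eta$, so $\partial_{z_\alpha}=\half\,\partial_{\xi_\alpha}$ on such functions, and consequently every $x$-derivative of $f$ is again $T^n$-invariant; combined with $\partial x_i/\partial\xi_\alpha=v_{\xi_i\xi_\alpha}$ and the chain rule this gives $\partial_{z_\alpha}h=\half\sum_i h_i\,u^{i\alpha}$ for every $T^n$-invariant $h$, where $h_i=\partial h/\partial x_i$ and we have used the Legendre identity $v_{\xi_i\xi_j}=u^{ij}$ (the real Hessians of $v$ in $\xi$ and of $u$ in $x$ are inverse to each other). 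Second, the K\"ahler metric of $\omega_\phi$ is $g_{\alpha\bar\beta}=\partial_{z_\alpha}\partial_{\bar z_\beta}v=\tfrac14 v_{\xi_\alpha\xi_\beta}=\tfrac14 u^{\alpha\beta}$, hence $g^{\alpha\bar\beta}=4\,u_{\alpha\beta}$.

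The heart of the argument is the identity
\begin{equation}\label{eq:holcov}
\nabla_{\alpha_1}\!\cdots\nabla_{\alpha_k} f \;=\; \frac{1}{2^k}\sum_{i_1,\dots,i_k} u^{i_1\alpha_1}\cdots u^{i_k\alpha_k}\, f_{i_1\cdots i_k},
\end{equation}
where $f_{i_1\cdots i_k}=\partial^k f/\partial x_{i_1}\cdots\partial x_{i_k}$ and $\nabla$ is the K\"ahler (Chern) connection; I would prove it by induction on $k$. Since $f$ is real and everything on the right is real, the left side is a real symmetric $(k,0)$-tensor, which is precisely $\partial^k f$. The case $k=1$ is exactly the chain-rule identity $\partial_{z_\alpha}f=\half\sum_i u^{i\alpha}f_i$ from the first paragraph. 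For the inductive step one applies $\nabla_{\alpha_0}=\partial_{z_{\alpha_0}}-\sum_{m=1}^k\Gamma^\gamma_{\alpha_0\alpha_m}(\,\cdot\,)$ to the right side of \eqref{eq:holcov}, using that on a K\"ahler manifold the only Christoffel symbols entering are $\Gamma^\gamma_{\alpha\beta}=g^{\gamma\bar p}\partial_{z_\alpha}g_{\beta\bar p}=\half\sum_p u_{\gamma p}\,v_{\xi_\beta\xi_p\xi_\alpha}$. Differentiating the factor $f_{i_1\cdots i_k}$ produces the desired $(k{+}1)$-term with the extra factor $\half\,u^{i_0\alpha_0}$; differentiating one of the Hessian factors $u^{i_m\alpha_m}=v_{\xi_{i_m}\xi_{\alpha_m}}$ produces an error term carrying the third derivative $\half\,v_{\xi_{i_m}\xi_{\alpha_m}\xi_{\alpha_0}}$; and the Christoffel terms, after the contraction $\sum_\gamma u^{i_m\gamma}u_{\gamma p}=\delta^{i_m}_{p}$, contribute exactly the negatives of these error terms, using the symmetry of the third derivatives of $v$ and keeping careful track of which index slot is being corrected. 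All error terms cancel and \eqref{eq:holcov} follows at level $k+1$. This cancellation is the one nontrivial point; everything else is bookkeeping.

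Finally I would contract \eqref{eq:holcov} against the metric. Since $\nabla_{\alpha_1}\cdots\nabla_{\alpha_k}f$ is real,
$$
|\partial^k f|_g^2 \;=\; \sum g^{\alpha_1\bar\beta_1}\cdots g^{\alpha_k\bar\beta_k}\,\bigl(\nabla_{\alpha_1}\!\cdots\nabla_{\alpha_k}f\bigr)\bigl(\nabla_{\beta_1}\!\cdots\nabla_{\beta_k}f\bigr);
$$
substituting \eqref{eq:holcov}, writing $g^{\alpha\bar\beta}=4u_{\alpha\beta}$, and carrying out the contractions $\sum_{\alpha_l,\beta_l} u_{\alpha_l\beta_l}u^{i_l\alpha_l}u^{j_l\beta_l}=u^{i_lj_l}$ in each of the $k$ slots, the factors $4^k$ and $(1/2^k)^2$ cancel and one is left with $\sum u^{i_1 j_1}\cdots u^{i_k j_k} f_{i_1\cdots i_k} f_{j_1\cdots j_k}$, the claimed formula. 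The main obstacle is the inductive cancellation of the Christoffel corrections in \eqref{eq:holcov}; once that is in hand the conclusion is immediate.
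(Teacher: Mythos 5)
Your argument is correct and is essentially the computation the paper carries out: both reduce the claim to the fact that, after raising indices with $g^{\alpha\bar\beta}=\mathrm{const}\cdot u_{\alpha\beta}$, the holomorphic covariant derivatives of a $T^n$-invariant function turn into the flat derivatives $f_{i_1\cdots i_k}$ in the moment-map coordinate $x=\nabla v$. The only organizational difference is that the paper contracts with the inverse metric \emph{first}, so that the Christoffel symbols never appear (on a K\"ahler manifold the covariant derivative in a holomorphic direction of a tensor with only barred upper indices is the plain $\partial_{z}$-derivative, and the operator $g^{i\bar j}\partial_{z_i}=\partial_{x_j}$ can then be iterated directly), whereas you prove the stronger uncontracted tensor identity and verify the cancellation of the Christoffel corrections against the derivatives of the Hessian factors by hand.
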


\begin{proof}
By definition,
$$
|\partial^k f|_g^2 = g^{i_1 \bar{j}_1} \cdots g^{i_k \bar{j}_k} f_{,\ i_1 \cdots i_k} f_{,\ \bar{j}_1 \cdots \bar{j}_k}.
$$
A direct calculation shows
\begin{eqnarray*}
& & g^{i_1 \bar{j}_1} \cdots g^{i_k \bar{j}_k} f_{,\ i_1 \cdots i_k} f_{,\ \bar{j}_1 \cdots \bar{j}_k} \\
&=& g^{i_k \bar{j}_k} \frac{\partial}{\partial z_{i_k}} \left(  g^{i_{k-1} \bar{j}_{k-1} } \cdots g^{i_1 \bar{j}_1} f_{,\ i_1 \cdots i_{k-1}} \right) f_{,\ \bar{j}_1 \cdots \bar{j}_k} \\
&=& g^{i_k \bar{j}_k} \frac{\partial}{\partial z_{i_k}} \left(  g^{i_{k-1} \bar{j}_{k-1} } \frac{\partial}{\partial z_{i_{k-1}}} \left( \cdots g^{i_2 \bar{j}_2} \frac{\partial}{\partial z_{i_2}} \left( g^{i_1 \bar{j}_1} f_{i_1} \right) \cdots \right) \right) f_{,\ \bar{j}_1 \cdots \bar{j}_k} \\
&=& \frac{\partial^k f}{\partial x_{j_k} \cdots \partial x_{j_1}} f_{,\ \bar{j}_1 \cdots \bar{j}_k} \\
&=& \delta_{j_1}^{i_1} \cdots \delta_{j_k}^{i_k} \frac{\partial^k f}{\partial x_{i_k} \cdots \partial x_{i_1}} f_{,\ \bar{j}_1 \cdots \bar{j}_k} \\
&=& u^{i_1 \alpha_1 } \cdots u^{i_k \alpha_k} \frac{\partial^k f}{\partial x_{i_k} \cdots \partial x_{i_1}} g^{\alpha_1 \bar{j}_1} \cdots g^{\alpha_k \bar{j}_k} f_{,\ \bar{j}_1 \cdots \bar{j}_k} \\
&=& u^{i_1 j_1} \cdots u^{i_k j_k}f_{i_1 \cdots i_k}f_{j_1 \cdots j_k}.
\end{eqnarray*}
\end{proof}

\begin{cor}
\label{reg}
If $(D^2 u)$ is bounded from above, then $D^k S$ is bounded.
\end{cor}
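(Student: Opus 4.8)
The plan is to read the desired Euclidean bound directly off the identity in the preceding Proposition, applied to $f = S$, feeding it the geometric curvature bounds coming from the regularity theorem and the hypothesis on $(D^2 u)$.

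First I would record that, by the regularity theorem, $\max_x |\nabla^k Rm|$ is bounded by a constant $C(n,k)$, and that consequently the analogous geometric norm of the $k$-th derivative of $S$ is controlled: since $S$ is a contraction of the curvature tensor, $\nabla^k S$ is a universal polynomial expression in $\nabla^j Rm$ for $j \le k$ together with the metric, so $|\nabla^k S|_g \le C(n,k)$, and in particular the $(k,0)$-type quantity $|\partial^k S|_g^2$ appearing in the Proposition is bounded by $C(n,k)$. Since the metric is $T^n$-invariant, $S$ is a smooth torus-invariant function, so the Proposition applies and gives
$$|\partial^k S|_g^2 = \sum_{i_1,j_1,\dots,i_k,j_k} u^{i_1 j_1}\cdots u^{i_k j_k}\, S_{i_1\cdots i_k}\, S_{j_1\cdots j_k},$$
where $S_{i_1\cdots i_k} = \partial^k S / \partial x_{i_1}\cdots\partial x_{i_k}$ is the ordinary partial derivative in the symplectic coordinates.

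Next I would exploit the hypothesis. The bound $(u_{ij}) < C\,I_n$ on the Hessian of the symplectic potential is equivalent to the lower bound $(u^{ij}) > C^{-1} I_n$ on its inverse. Fixing a point and diagonalizing $(u^{ij})$ there, with eigenvalues $\mu_1,\dots,\mu_n \ge C^{-1}$, the sum on the right-hand side above equals $\sum_{i_1,\dots,i_k} \mu_{i_1}\cdots\mu_{i_k}\,(S_{i_1\cdots i_k})^2 \ge C^{-k} \sum_{i_1,\dots,i_k}(S_{i_1\cdots i_k})^2$, i.e. $|\partial^k S|_g^2 \ge C^{-k}\,|D^k S|^2$ where $|D^k S|^2$ denotes the Euclidean norm of the $k$-th symplectic-coordinate derivatives of $S$. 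Combining this with the bound from the first step yields $|D^k S|^2 \le C^k\,|\partial^k S|_g^2 \le C'(n,k)$, which is exactly the assertion, the final constant also absorbing the upper bound on $(D^2 u)$.

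The only step needing care is the first one: one must check that the particular $(k,0)$-type norm $|\partial^k S|_g$ used in the Proposition is genuinely controlled by the geometric curvature derivatives $|\nabla^j Rm|_g$, not merely by ordinary coordinate derivatives. This is routine — $\nabla^{(k,0)} S$ is a fixed projection and contraction of the full iterated covariant derivative $\nabla^k S$, whose norm is bounded by the regularity theorem together with the already-established two-sided bounds on $(u_{ij})$ — so I do not expect a genuine obstacle; everything past it is linear algebra.
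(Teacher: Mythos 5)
Your proof is correct and follows essentially the same route as the paper: apply the preceding Proposition to $f=S$, diagonalize $(D^2u)$ at a point, use $\lambda_i\le C$ to get $|\partial^k S|_g^2\ge C^{-k}|D^kS|^2$, and conclude from the regularity theorem's bound on the geometric norm of $\nabla^k S$. The only difference is that you explicitly flag the need to compare the $(k,0)$-type norm in the Proposition with the covariant derivatives controlled by the regularity theorem, a point the paper passes over silently.
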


\begin{proof}
We choose an orthonormal basis such that $(D^2 u) = diag(\lambda_1, \ldots, \lambda_n)$. Since $(D^2 u)$ is bounded from above, we have $\lambda_i \leq C$ for all $i$.  Thus
\begin{eqnarray*}
& & u^{i_1 j_1} \cdots u^{i_k j_k}S_{i_1 \cdots i_k}S_{j_1 \cdots j_k}\\
&=& \sum_{i_1,\ldots,i_k} \frac{S_{i_1 \cdots i_k}^2}{\lambda_{i_1} \cdots \lambda_{i_k}}\\
&\geq& \frac{1}{C^k} \sum_{i_1,\ldots,i_k} S_{i_1 \cdots i_k}^2.
\end{eqnarray*}
Since $|\nabla^k S|$ is bounded by the regularity theorem, we conclude that $|D^k S|$ is bounded.
\end{proof}

We normalize $u_i(0, \cdot)$ at the origin by subtracting an affine function such that
$$
u_i(0,0) = 0, \quad D_x u_i(0,0) = 0.
$$
Since for any $t$, $\left(D^2 u_i(t, \cdot) \right)$ is bounded from above,  we obtain the $C^k$ bound of $S$ in the Euclidean sense. Thus there is a constant $\delta_0 > 0$ such that
$$
C_1 < (D^2 u_i(t, 0)) < C_2
$$
for all $-\delta_0 \leq t \leq 0$. Hence for any $p$ and $ t \in [-\delta_0, 0]$,  $(D^2 u_i(t, p))$ is bounded from below where the lower bound depends on the Euclidean distance from $p$ to the origin. So we obtain the $C^k$ bound of $u_i(t, p)$ in the Euclidean sense for $t \in [-\delta_0, 0]$. 

Next we calculate the derivatives of $u_i(t, p)$ in time and the mixed derivatives in time and space. Notice that the first derivative of $u_i(t, \cdot)$ in $t$ is just the scalar curvature $S_i(t, \cdot)$. We use $u$ instead of $u_i(t, \cdot)$ in the following calculations for convenience. The second derivative of $u$ in $t$ is
\begin{eqnarray*}
\frac{\partial^2 u}{\partial t^2} & = & - \frac{\partial S}{\partial t} \\
&=& \sum_{ij} \left( \frac{\partial u^{ij} }{\partial t} \right)_{ij} \\
&=& \sum_{ij} \left( u^{ik} S_{kl} u^{jl} \right)_{ij} \\
&=& - \left( u^{i \alpha} u_{\alpha \beta i} u^{\beta k} S_{kl} u^{jl} \right)_j + \left(u^{ik} S_{kli} u^{jl} \right)_j - \left( u^{ik} S_{kl} u^{j\alpha} u_{\alpha \beta i} u^{\beta l} \right)_j \\
&=& u^{i \gamma} u_{\gamma \delta j} u^{\delta \alpha} u_{\alpha \beta i} u^{\beta k} S_{kl} u^{jl} - u^{i \alpha} u_{\alpha \beta i j} u^{\beta k} S_{kl} u^{jl} +\\
& & u^{i \alpha} u_{\alpha \beta i} u^{\beta \gamma} u_{\gamma \delta j} u^{\delta k} S_{kl} u^{jl} - u^{i \alpha} u_{\alpha \beta i} u^{\beta k} S_{klj} u^{jl} +\\
& &  u^{i \alpha} u_{\alpha \beta i} u^{\beta k} S_{kl} u^{j\gamma} u_{\gamma \delta j} u^{\delta l} - u^{i \alpha} u_{\alpha \beta j} u^{\beta k} S_{kli} u^{jl} +\\
& & u^{ik} S_{klij} u^{jl} - u^{ik} S_{kli} u^{j\alpha} u_{\alpha \beta j} u^{\beta l} + u^{i \gamma} u_{\gamma \delta j} u^{\delta k} S_{kl} u^{j\alpha} u_{\alpha \beta i} u^{\beta l} -\\
& & u^{ik} S_{klj} u^{j\alpha} u_{\alpha \beta i} u^{\beta l} + u^{ik} S_{kl} u^{j \gamma} u_{\gamma \delta j} u^{\delta \alpha} u_{\alpha \beta i} u^{\beta l} -\\
& & u^{ik} S_{kl} u^{j\alpha} u_{\alpha \beta i j}  u^{\beta l} + u^{ik} S_{kl} u^{j\alpha} u_{\alpha \beta i} u^{\beta \gamma} u_{\gamma \delta j} u^{\delta l}. \hspace{2 cm} (*)
\end{eqnarray*}

\begin{lemma}
If we change the coordinate system by an orthonormal transformation,  the value of $(*)$ remains unchanged at the origin. 
\end{lemma}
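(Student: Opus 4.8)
The plan is to observe that $(*)$ is, by its very derivation, the local coordinate expression of $\partial^{2}u/\partial t^{2}=-\partial S/\partial t$, and that each of its terms is a \emph{complete natural contraction} of the tensors $(u^{ij})$ (the inverse Hessian), $D^{3}u$, $D^{4}u$, $D^{2}S$, $D^{3}S$ and $D^{4}S$, where ``complete'' means that every lower index is summed against an upper index supplied by some factor $u^{\,\cdot\,\cdot}$. Once this is recognized, invariance under an orthonormal change of variables is essentially automatic.

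First I would record the transformation laws under a linear change of the symplectic coordinate $x\mapsto\tilde x=Ax$ with $A\in O(n)$. Since $A$ is constant, $u$ transforms as a scalar function, $\tilde u(\tilde x)=u(A^{-1}\tilde x)$, so each $D^{k}u$ transforms \emph{exactly} as a covariant $k$-tensor: the lower-order correction terms that would appear for a nonlinear reparametrization all involve derivatives of the transition map and hence vanish here. Dually $(u^{ij})=(D^{2}u)^{-1}$ transforms as a contravariant $2$-tensor, and $S=-\sum_{i,j}\partial_{i}\partial_{j}u^{ij}$ is itself a full contraction of such tensors, hence a scalar, so its Euclidean derivatives $D^{k}S$ again transform as covariant $k$-tensors.

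Second, substituting these laws into $(*)$ and collapsing the resulting products of $A$'s and $A^{-1}=A^{\top}$ against one another (using $AA^{\top}=I$), every term returns to itself; in fact one obtains invariance for every $A\in GL(n)$, and the restriction to $O(n)$ is used only because that is the class of changes that also preserves the Euclidean norms appearing in the curvature estimates. Finally, the origin is a fixed point of any linear map, so evaluating at $0$ before or after the change of coordinates refers to the same point, and $(*)|_{0}$ is unchanged. A one-line sanity check bypasses the index bookkeeping entirely: $(*)=-\partial S/\partial t$ and $S$ is the scalar curvature of $\omega_{\phi}$, an intrinsic quantity, so the time derivative of an intrinsic scalar at a fixed point cannot depend on the frame.

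I do not expect a genuine obstacle here. The only two points that require a moment's care are: (i) verifying that $(*)$ really leaves no index free and uncontracted, so that nothing in the expression ``feels'' the rotation; and (ii) noting that it is precisely the linearity of the change of variables that allows $D^{3}u$, $D^{4}u$ and the derivatives of $S$ to transform tensorially without correction terms — a general nonlinear reparametrization would not preserve $(*)$ term by term.
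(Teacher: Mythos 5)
Your proposal is correct and follows essentially the same route as the paper: one records the (purely tensorial, since the change of variables is linear) transformation laws for $D^k u$, $(u^{ij})$ and $D^k S$ at the origin under $x \mapsto xO$, and then checks that each term of $(*)$, being a complete contraction, is unchanged, which is exactly the term-by-term index computation the paper performs following Claim 4.1 of \cite{H1}. Your additional observations --- that the invariance actually holds for all of $GL(n)$, and the sanity check that $(*)=-\partial S/\partial t$ is intrinsic --- are correct but not needed for the argument.
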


\begin{proof}
Let $O=(a_{ij})$ be an orthonormal matrix and $v(x) = u(x ~ O), A(x) = S(x ~ O).$ Following the calculations in Claim 4.1 of \cite{H1}, we have
$$
\frac{\partial v}{\partial x_i}(0) = \sum_{\alpha} a_{i \alpha} u_\alpha(0), \quad \frac{\partial^2 v}{\partial x_i \partial x_j}(0) = \sum_{\alpha, \beta} a_{i\alpha} a_{j\beta} u_{\alpha \beta}(0), 
$$

$$
\frac{\partial^3 v}{\partial x_i \partial x_j \partial x_k}(0) = \sum_{\alpha, \beta, \gamma} a_{i\alpha} a_{j\beta} a_{k \gamma} u_{\alpha \beta \gamma}(0),
$$

$$
\frac{\partial^4 v}{\partial x_i \partial x_j \partial x_k \partial x_l}(0) = \sum_{\alpha, \beta, \gamma, \delta} a_{i\alpha} a_{j\beta} a_{k \gamma} a_{l \delta} u_{\alpha \beta \gamma \delta}(0).
$$

And

$$
\frac{\partial A}{\partial x_i}(0) = \sum_{\alpha} a_{i \alpha} S_\alpha(0), \quad \frac{\partial^2 A}{\partial x_i \partial x_j}(0) = \sum_{\alpha, \beta} a_{i\alpha} a_{j\beta} S_{\alpha \beta}(0), 
$$

$$
\frac{\partial^3 A}{\partial x_i \partial x_j \partial x_k}(0) = \sum_{\alpha, \beta, \gamma} a_{i\alpha} a_{j\beta} a_{k \gamma} S_{\alpha \beta \gamma}(0),
$$

$$
\frac{\partial^4 A}{\partial x_i \partial x_j \partial x_k \partial x_l}(0) = \sum_{\alpha, \beta, \gamma, \delta} a_{i\alpha} a_{j\beta} a_{k \gamma} a_{l \delta} S_{\alpha \beta \gamma \delta}(0).
$$

Also we have

$$
v^{ij}(0) = \sum_{\alpha, \beta} a_{i\alpha} a_{j\beta} u^{\alpha \beta}(0).
$$

By routine calculations, we can check that, for example,
\begin{eqnarray*}
u^{i \gamma} u_{\gamma \delta j} u^{\delta \alpha} u_{\alpha \beta i} u^{\beta k} S_{kl} u^{jl} = v^{i \gamma} v_{\gamma \delta j} v^{\delta \alpha} v_{\alpha \beta i} v^{\beta k} A_{kl} v^{jl}.
\end{eqnarray*}
Thus we obtain the conclusion.
\end{proof}

To show that $(*)$ is bounded, we can assume that we are in the origin. By an orthonormal transformation of the coordinate system, we can assume that $(u_{ij}) = diag (\lambda_1, \ldots, \lambda_n)$ is a diagonal matrix. Then

$$
u^{i \gamma} u_{\gamma \delta j} u^{\delta \alpha} u_{\alpha \beta i} u^{\beta k} S_{kl} u^{jl} = \sum_{i,j,k,\delta} \frac{1}{\lambda_i \lambda_j  \lambda_k \lambda_\delta} u_{i \delta j} u_{\delta k i} S_{kj}.
$$
The right hand side is obviously bounded. Thus we conclude that $\frac{\partial^2 u}{\partial t^2}(t, p), t \in [-\delta_0, 0]$ is bounded by a constant depending only on the Euclidean distance between $p$ and the origin. Similar arguments show that

\begin{prop}
\label{time derivative}
$$
\frac{\partial^k u}{\partial t^k}(t, p), \quad \frac{\partial^{k+l} u}{\partial t^k \partial x^l}(t, p)
$$ is bounded for all $t \in [-\delta_0, 0], k \geq 1, l \geq 0$. The bounds depend only on $k$ or $k, l$ and the Euclidean distance between $p$ and the origin.
\end{prop}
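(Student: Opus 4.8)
The plan is to prove this by induction on $k$, showing that every derivative in question is a \emph{universal polynomial} in a short list of quantities all of which have already been bounded; the $k=2$ case computed explicitly in $(*)$ is the prototype of the general step.

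First I would introduce the class $\mathcal P$ of expressions that are finite sums of products of: entries of the inverse Hessian $(u^{ij})$; spatial derivatives $u_{i_1\cdots i_m}$ of $u$ of order $m\ge 2$; and spatial derivatives $S_{j_1\cdots j_m}$ of the scalar curvature of order $m\ge 0$. The structural claims are: (i) $\partial_t u=-S$ lies in $\mathcal P$; (ii) $\mathcal P$ is closed under $\partial_{x_i}$, since $\partial_{x_m}u^{ij}=-u^{ia}u_{abm}u^{bj}$ while differentiating a factor $u_{i_1\cdots i_m}$ or $S_{j_1\cdots j_m}$ merely raises its order; (iii) $\mathcal P$ is closed under $\partial_t$: one has $\partial_t u^{ij}=u^{ik}S_{kl}u^{jl}\in\mathcal P$, $\partial_t u_{i_1\cdots i_m}=(\partial_t u)_{i_1\cdots i_m}=-S_{i_1\cdots i_m}\in\mathcal P$ (still of order $\ge 2$), and $\partial_t S=-(u^{ik}S_{kl}u^{jl})_{ij}\in\mathcal P$ by (i)--(ii), whence $\partial_t S_{j_1\cdots j_m}=(\partial_t S)_{j_1\cdots j_m}\in\mathcal P$ again by (ii). Combining (i)--(iii) and using that space and time derivatives commute, an immediate induction on $k$ gives $\partial_t^k u\in\mathcal P$ for all $k\ge 1$, hence $\partial_t^k\partial_x^l u=\partial_x^l\partial_t^k u\in\mathcal P$ for all $k\ge 1$, $l\ge 0$.

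It then remains to observe that every element of $\mathcal P$ is bounded at $(t,p)$ for $t\in[-\delta_0,0]$, which is essentially already done above: $(u^{ij})$ is bounded because $(D^2 u_i(t,p))$ has a lower bound (deteriorating in the Euclidean distance from $p$ to the origin, via the pinching at the origin on $[-\delta_0,0]$ together with the propagation estimate for the Hessian), the factors $u_{i_1\cdots i_m}$ are bounded by the Euclidean $C^k$ estimates on $u_i(t,\cdot)$ already established for $t\in[-\delta_0,0]$, and the factors $S_{j_1\cdots j_m}$ are bounded by Corollary~\ref{reg}. Thus each factor in each term of an element of $\mathcal P$ is controlled, the number and orders of the factors depend only on $k$ (or on $k,l$), and the only distance-dependence enters through the Hessian lower bound away from the origin, yielding exactly the asserted bounds.

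The main obstacle is the bookkeeping in step (iii): checking carefully that differentiating the $S$-factors in time produces nothing genuinely new --- only spatial derivatives of expressions already in $\mathcal P$, via the identity for $\partial_t S$ --- and tracking how the number and order of factors grow under $\partial_t$ and $\partial_x$ so that the final bounds depend only on $k,l$ and on the Euclidean distance from $p$ to the origin. No new analytic input beyond the regularity theorem and the estimates established above is needed.
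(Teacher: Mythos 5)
Your proposal is correct and follows essentially the same route as the paper: the paper computes the $k=2$ case $(*)$ explicitly, observes that each term is a product of factors controlled by the Hessian bounds, the Euclidean $C^k$ estimates on $u$, and Corollary~\ref{reg}, and then invokes ``similar arguments'' for general $k,l$. Your induction on the class $\mathcal P$ is precisely the systematic version of those ``similar arguments'' (and it replaces the paper's orthonormal-diagonalization step by bounding $(u^{ij})$ directly via the Hessian lower bound, which is equivalent), so no new idea is involved --- it just makes the bookkeeping explicit.
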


So there is a subsequence of $u_i(t)$ converging to a limiting Calabi flow $u_\infty(t), -\delta_0 \leq t \leq 0$. Moreover, $u_\infty(0)$ is a smooth convex function $\bar{u}$ in $\mathbb{R}^n$ with the following property:
\begin{enumerate}
\item The $L^\infty$ norm of Riemannian curvature is bounded by 1, i.e., $$|Rm|_{L^\infty}=\max_{p \in \mathbb{R}^n}\sum_{i, j} \bar{u}^{ij}_{\ kl}(p) \bar{u}^{kl}_{\ ij}(p) \leq 1.$$
\item The Euclidean derivative of $\bar{u}$ is bounded by $M$ which is the same constant in the $M$-condition, i.e.,
$$
|D \bar{u}| < M.
$$
\end{enumerate}

We want to rule out the above singularity by the following nonexistence lemma:

\begin{lemma}
If in addition $\bar{S} = 0$, then such $\bar{u}$ cannot exist.
\end{lemma}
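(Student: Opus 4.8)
The plan is to show that $\bar S=0$ is incompatible with properties (1)--(2) of $\bar u$, by reducing everything to a Liouville-type rigidity for Abreu's equation and then invoking the J\"orgens--Calabi--Pogorelov theorem. First I would record the consequences of the hypotheses. Since $|D\bar u|<M$, the potential $\bar u$ satisfies the $M$-condition on every segment of $\R^n$ (replacing $M$ by $2M$ if necessary), so by the first lemma of Section 4 we obtain a global bound $(\bar u_{ij})<C(M)I_n$ on $\R^n$; hence $\rho:=1/\det(\bar u_{ij})$ is bounded below by a positive constant and the cofactor operator $U^{ij}\partial_i\partial_j$ has coefficients bounded above. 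Normalising the blow-up point to the origin, Lemma \ref{lower} gives $(\bar u_{ij}(0))>C_2 I_n$, and the distance-propagation estimate $(\bar u_{ij}(y))\ge e^{-2d(0,y)}(\bar u_{ij}(0))$, with $d$ comparable to the Euclidean distance by the upper bound, shows that $\rho$ grows at most exponentially. Finally, in the dimension-$2$ setting of Theorem \ref{long} the total-energy bound passes to the limit, so $\int_{\R^2}|Rm|^2\,dx<C$; together with the regularity theorem this forces $|Rm|(x)\to 0$ as $|x|\to\infty$.

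Next I would use $\bar S=0$. In Donaldson's form Abreu's equation reads $U^{ij}\rho_{ij}=-\bar S=0$, so $\rho$ is a positive solution of a linear elliptic equation on all of $\R^n$. The key claim is that $\rho$ must be constant. Granting it, $\det(\bar u_{ij})$ is a constant, so $\bar u$ is an entire convex solution of a Monge--Amp\`ere equation $\det(\bar u_{ij})=\mathrm{const}$ on $\R^n$; by the J\"orgens--Calabi--Pogorelov theorem $\bar u$ is then a quadratic polynomial, whose gradient grows linearly, contradicting $|D\bar u|<M$. Hence no such $\bar u$ can exist.

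The heart of the matter, and the main obstacle, is the claim that $\rho\equiv\mathrm{const}$: the operator $U^{ij}\partial_i\partial_j$ is elliptic but not uniformly so --- its ellipticity degenerates at spatial infinity --- and $\rho$ is only controlled with exponential growth, so a direct Liouville argument does not apply. I would overcome this using the curvature decay $|Rm|(x)\to 0$ coming from the finite-energy bound: once $|Rm|$ is small off a compact set, Abreu's equation together with the interior Schauder estimates already used in Section 5 forces $(\bar u_{ij})$ to be asymptotically bounded below, so $U^{ij}\partial_i\partial_j$ becomes uniformly elliptic and $\rho$ becomes bounded outside a compact set; a Harnack inequality (Moser iteration) for the positive solution $\rho$ of $U^{ij}\rho_{ij}=0$ then yields $\rho\equiv\mathrm{const}$. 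An alternative is a Pogorelov-type rigidity applied directly to the convex function $\bar u$. In any case, the estimates of the first paragraph are routine consequences of results already established in the paper, and it is this Liouville/rigidity step that carries the real content.
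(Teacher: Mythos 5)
The paper does not actually prove this lemma: it cites Theorem 2 of \cite{D3} (dimension $2$) and Proposition 5.2 of \cite{H1} (higher dimensions), so your self-contained attempt is necessarily a different route. Your preliminary reductions are fine: the global upper bound $(\bar u_{ij})<C(M)I_n$, the lower bound at the marked point, the exponential propagation of that lower bound, and (in dimension $2$) the decay $|Rm|(x)\to 0$ as $|x|\to\infty$ all follow from results quoted in Sections 4--5; and the closing step (if $\det(\bar u_{ij})$ were constant, J\"orgens--Calabi--Pogorelov would make $\bar u$ a quadratic, contradicting $|D\bar u|<M$) is correct.

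The gap is exactly where you locate the ``heart of the matter,'' and it is fatal as written. You claim that the curvature decay forces $(\bar u_{ij})$ to be bounded below outside a compact set, so that $U^{ij}\partial_i\partial_j$ becomes uniformly elliptic and a Harnack/Liouville argument gives $\rho\equiv\mathrm{const}$. This is not merely unjustified --- it contradicts the hypothesis $|D\bar u|<M$. For any unit vector $\nu$ and any full line $\{p+t\nu\}$, convexity gives $\int_{-T}^{T}\bar u_{\nu\nu}(p+t\nu)\,dt=\nabla_\nu\bar u(p+T\nu)-\nabla_\nu\bar u(p-T\nu)\le 2M$, so the smallest eigenvalue of $D^2\bar u$ is integrable along every line and cannot have a positive lower bound near infinity. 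Hence $U^{ij}\partial_i\partial_j$ genuinely degenerates at infinity, $\det(\bar u_{ij})$ is \emph{not} bounded below (so $\rho$ is genuinely unbounded, not bounded outside a compact set), and neither Krylov--Safonov nor Moser iteration applies; even the Caffarelli--Guti\'errez Harnack inequality for the linearized Monge--Amp\`ere operator needs a two-sided bound on $\det D^2\bar u$, which fails here. So the Liouville step $\rho\equiv\mathrm{const}$, which carries the entire content of the lemma, is not established --- and this degeneration at infinity is precisely the difficulty that the cited arguments of \cite{D3} and \cite{H1} are designed to overcome. A smaller point: your use of the total-energy bound confines the argument to dimension $2$, while the lemma is stated and invoked for all $n$.
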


\begin{proof}
For dimensional 2, see Theorem 2 in \cite{D3}. For higher dimension, see Proposition 5.2 in \cite{H1}.
\end{proof}

\begin{rmk}
In dimensional 2, Jia and Li prove a more general result in \cite{JL}: the solution to the equation
$$
\sum_{ij} u^{ij}_{\ ij} = 0
$$
in $\mathbb{R}^2$ must be a quadratic function.
\end{rmk}

\subsection{The case of  dimension 2}

In dimension 2, we prove that the scalar curvature of the limiting Calabi flow $u_\infty(t)$ is 0 . For each $u_i(t)$, we temporarily suppress the index $i$. Let $$E^{ij} = \frac{\partial u^{ij}}{\partial t}$$ and notice that the derivative of the Calabi energy
$$
Ca_i (t) = \int_{P_i} S(t,x)^2 dx
$$
with respect to $t$ is
\begin{eqnarray*}
& &\frac{\partial}{\partial t} \int_{P_i} S(t,x)^2 dx\\
& = & - 2\int_{P_i} (S - \underline{S}) E^{ij}_{\ ij} dx \\
& = & -2\int_{\partial P_i} (S - \underline{S}) E^{ij}_{\ i} \nu_j ds+ 2 \int_{P_i} S_j E^{ij}_{\ i} dx \\
& = & 2 \int_{P_i} S_j E^{ij}_{\ i} dx\\
& = & 2 \int_{\partial P_i} S_j E^{ij} \nu_i ds - 2 \int_{P_i} S_{ij}E^{ij} dx\\
& = & -2 \int_{P_i} S_{ij} E^{ij} d \mu \\
& = & -2 \int_{P_i} S_{ij} u^{ia} S_{ab} u^{bj} dx \\
&\leq& 0.\\
\end{eqnarray*}
Hence
$$
Ca_i(-\delta_0) - Ca_i(0) = 2 \int_{-\delta_0}^0 \int_{P_i} S_{ij} u^{ia} S_{ab} u^{bj} \ dx \ dt.
$$

For the limiting Calabi flow, we have
$$
0 = \lim_{i \rightarrow \infty}Ca_i(-\delta_0) - Ca_i(0) \geq 2 \int_{-\delta_0}^0 \int_{\mathbb{R}^n}S_{ij} u^{ia} S_{ab} u^{bj} \ dx \ dt.
$$
That is to say, $S(t)$ must be an affine function on $\mathbb{R}^n$. Since
$$
Ca_\infty(0) = \lim_{i \rightarrow \infty} Ca_i(0) < C,
$$
$S(t)$ must be 0. 

\begin{proof}[Proof of Theorem \ref{long}]
In \cite{Ca1}, Calabi shows that the Calabi flow decreases the Calabi energy. Moreover, he shows that in dimension 2, the total energy is equivalent to the Calabi energy. Combining the above results, we obtain that the curvature is uniformly bounded along the Calabi flow.
\end{proof}

\section{Exponential convergence}

Suppose that the Calabi flow exists for all time and the curvature is uniformly bounded. The remaining question is whether the Calabi flow converges to the flat K\"ahler metric. A well-known fact is that the Calabi flow decreases the Mabuchi energy. In our case, the Mabuchi energy can be explicitly written as
$$
Ma(u) = - \int_P \log \det(u_{ij}) \ dx.
$$

Taking the derivative with respect to the time variable $t$, we obtain
$$
\frac{\partial}{\partial t} Ma(u(t,x)) = \int_P u^{ij}(t,x) S_{ij}(t,x) \ dx = - \int_P S^2(t,x) \ dx  \leq 0.
$$

Since the curvature is bounded uniformly and the $M$-condition is preserved along the Calabi flow, we know that $(u_{ij}(t,x))$ is bounded from above. The fact that the Mabuchi energy is decreasing along the Calabi flow shows that for any $t$, there is at least one point $p \in P$ such that
$$
\det(D^2 u(t,x) (p)) > C
$$
for some constant $C$. Hence
$$
(D^2 u(t,x) (p)) > C I_n.
$$
Thus $(D^2 u(t,x))$ is bounded from below point-wisely. Notice that $u(t,x)$ has a priori $C^0$ and $C^1$ bound. Hence if we take a sequence of time $t_i \rightarrow \infty$, applying Corollary (\ref{reg}) and Proposition (\ref{time derivative}), we can show that there is a subsequence of $t_i$ such that the Calabi flow
$$u_i(t,x) = u(t- t_i, x)$$ in the interval $[-1,0]$ converges to a limiting flow $u_\infty (t,x), t \in [-1,0]$. Following the arguments as in the previous section, we conclude that $A_\infty (t,x)$ must be an affine function with respect to $x$. Hence $u_\infty (t,x)$ must be the flat K\"ahler metric.

To show the exponential convergence, by the stability result of \cite{HZ}, we only need to show that the corresponding K\"ahler potential $\phi(t_i, \xi)$ satisfies the following conditions:

\begin{itemize}
\item $$\omega(t_i) \geq C_1 \omega_\infty.$$

\item $$|\phi(t_i, \xi)|_{C^{2,\alpha}(\omega_\infty)} < C_2.$$

\item $$\lim_{i \rightarrow \infty} dist(\phi(t_i), \phi_\infty) = 0.$$
\end{itemize}

It is easy to see that the first and the last conditions are satisfied. Since $\omega_\infty$ is a flat metric, to control the $C^{2, \alpha}$ norm of $\phi(t_i,\xi)$. By compact embedding $C^3 \hookrightarrow C^{2,\alpha}$, we only need to control the third derivative of $\phi(t_i,\xi)$,
$$
\frac{\partial^3 \phi(t_i,\xi)}{\partial \xi_j \partial \xi_k \partial \xi_l} < C.
$$
We can obtain this inequality by the following formula
\begin{eqnarray*}
& &\frac{\partial^3 \phi(t_i,\xi)}{\partial \xi_j \partial \xi_k \partial \xi_l}\\
 &=& \frac{\partial u^{jk}(t_i,x) }{\partial \xi_l}\\
& = & \frac{\partial x_\alpha}{\partial \xi_l}  \frac{\partial u^{jk}(t_i,x) }{\partial x_\alpha} \\
&=& u^{\alpha l}(t_i,x)  \frac{\partial u^{jk}(t_i,x) }{\partial x_\alpha} \\
&=& - u^{j \alpha}(t_i,x) u^{k \beta}(t_i,x) u^{l \gamma}(t_i,x) u(t_i,x)_{\alpha \beta \gamma}.
\end{eqnarray*}

\appendix
\section{}
In this appendix, we want to show that the $M$-condition with the weak regularity theorem can give us the regularity theorem.
\begin{thm}
Suppose in $P$, the $L^\infty$ norm of Riemann curvature tensor is bounded by 1 and the symplectic potential $u$ satisfies the $M$-condition. If
$$
\int_P |\nabla^k Rm|^2 (x) dx < C(k),
$$
for all $k$, then
$$
|\nabla^k Rm|(x) < C(k),
$$
for all $k$ and $x \in P$.
\end{thm}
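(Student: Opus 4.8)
The plan is a Sobolev bootstrap carried out in the symplectic (Legendre) coordinates, the point being that the $M$-condition makes the underlying geometry uniformly comparable to the flat one, so that everything reduces to interior Euclidean estimates on a fixed cube with absolute constants; by the periodicity of $u-\underline u$ any estimate needed on $P$ can be obtained by working on a fixed cube $Q\supset P$, so there are no boundary terms. \emph{Step 1 (two-sided control of the Hessian).} Applying the Lemma above to the hypotheses $|Rm|^2\le 1$ and the $M$-condition gives a uniform upper bound $(u_{ij}(x))<C_0 I_n$ on $P$, with $C_0=C_0(M)$; combined with the curvature bound this also produces a uniform lower bound $(u_{ij}(x))>c_0 I_n$ (this is where the mechanism of Lemma~\ref{lower} and its propagation enters), so that $\det(u_{ij})$ is pinched between positive constants and $C_0^{-1}I_n\le(u^{ij})\le c_0^{-1}I_n$ everywhere on $P$.

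\emph{Step 2 (from curvature integrals to Sobolev bounds on $u$).} The structural input is the expression of $\nabla^k Rm$ in symplectic coordinates coming from the proof of the weak regularity theorem: each component is a universal polynomial in the $u^{ij}$ and the Euclidean derivatives $D^3u,\dots,D^{k+4}u$, linear in $D^{k+4}u$ with coefficient a product of $u^{ij}$'s. Exactly as in the Proposition above (whose proof shows that the Riemannian norm of a torus-invariant tensor contracts against the $u^{ij}$ with no further correction), the pinching of Step~1 converts this into a differential inequality of the form
$$
c(k)\,|D^{k+4}u|^2\ \le\ |\nabla^k Rm|_g^2\ +\ C(k)\bigl(1+|D^3u|+\dots+|D^{k+3}u|\bigr)^{N(k)} .
$$
Integrating over $P$, using the hypothesis $\int_P|\nabla^k Rm|^2\,dx<C(k)$, and inducting on $k$ (with base cases $m\le 4$ supplied by Step~1 for $D^2u$ and by the $k=0$ bound for $D^3u,D^4u$) yields uniform bounds $\int_P|D^m u|^2\,dx<C(m)$ for every $m$, i.e. $u$ is bounded in $W^{m,2}(P)$ for all $m$.

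\emph{Step 3 (embedding and return).} By the Euclidean embedding $W^{m,2}(Q)\hookrightarrow C^{m-\lfloor n/2\rfloor-1}(Q)$, the bounds of Step~2 upgrade to uniform pointwise bounds on $D^j u$ for every $j$. Substituting these, together with the two-sided Hessian bound of Step~1, back into the coordinate formula for $|\nabla^k Rm|_g^2$ --- a polynomial in the $u^{ij}$ and in $D^{\le k+4}u$ --- gives $|\nabla^k Rm|(x)<C(k)$ for all $k$ and all $x\in P$, as claimed.

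The main obstacle is the structural differential inequality in Step~2: one must actually write out (or extract from the weak regularity computation) the coordinate expression for $\nabla^k Rm$, identify its leading part $D^{k+4}u$, and check that the remainder is genuinely of strictly lower order in the number of derivatives so that the induction closes. This is also the step where the lower bound on $(u_{ij})$ from Step~1 is indispensable: without it the coefficients $u^{ij}$ appearing in $\nabla^k Rm$ are uncontrolled, and neither the passage from the curvature integrals to Sobolev norms of $u$ nor the final substitution would work. Granting the structural lemma, Steps~2 and 3 are a routine induction together with Sobolev embedding.
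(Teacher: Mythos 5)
There is a genuine gap in Step 1, and it is fatal to the route you chose. From the hypotheses of this theorem ($|Rm|\le 1$ and the $M$-condition) one only gets the \emph{upper} bound $(u_{ij})<C(M)\,I_n$; the uniform lower bound $(u_{ij})>c_0 I_n$ that you claim follows ``combined with the curvature bound'' does not. In the paper the lower Hessian bound comes from Lemma \ref{lower}, which needs two extra hypotheses absent here --- the total energy bound $\int_P|Rm|^n\,dx<C_1$ and the normalization $|Rm|(x)=1$ at the point in question --- and even then it holds only at that point and propagates to other points with a factor $e^{-2d}$. A concrete obstruction: $u=\frac12(\epsilon x_1^2+x_2^2+\dots+x_n^2)$ is flat, satisfies every hypothesis of the theorem (with a fixed $M$), and has $u_{11}=\epsilon$ arbitrarily small, so no uniform lower bound on $(u_{ij})$ can be extracted from these assumptions. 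Since you state yourself that the lower bound is indispensable for Step 2 (to control from above the coefficients $u^{ij}$ in the non-leading terms) and for the final substitution in Step 3, the bootstrap as proposed does not close.

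The paper's proof sidesteps this entirely by never differentiating the coordinate expression of $u$: it runs the Sobolev iteration directly on the scalar functions $F^k=|\nabla^k Rm|$. Kato's inequality gives $|\nabla F^k|\le F^{k+1}$, and the \emph{upper} Hessian bound alone gives $(u^{ij})>C^{-1}I_n$, hence $|\nabla F^k|_g^2=2\sum_{i,j} u^{ij}F^k_iF^k_j\ge C^{-1}|\nabla_E F^k|_E^2$; this one-sided metric comparison is all that is needed. The hypothesis at level $k+1$ then bounds $\int_P|\nabla_E F^k|_E^2$, and the Euclidean Sobolev embedding, iterated finitely many times using the bounds at all levels simultaneously, upgrades $F^k$ to $L^\infty$. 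To salvage your approach you would either have to add the lower Hessian bound as a hypothesis, or reorganize Step 2 so that only lower bounds on $(u^{ij})$ (equivalently, upper bounds on $(u_{ij})$) enter --- which is in effect what the paper's argument accomplishes.
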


\begin{proof}
Let $F^k (x) = |\nabla^k Rm|(x)$. Since the curvature is bounded and $u$ satisfies the $M$-condition, we conclude that
$$
(u_{ij} (x)) < C,
$$
for all $x \in P$.
It is easy to see that for any $x \in P$,
$$
|\nabla F^k(x)| \leq |F^{k+1}(x)|.
$$
Since
$$
|\nabla F^k(x)|^2 =2 \sum_{i, j} u^{ij} F^k_i F^k_j \geq C |\nabla_E F^k|_E^2,
$$
where $ |\nabla_E F^k|_E$ is the Euclidean norm of the Euclidean derivative. Thus we have
$$
\int_P |\nabla_E F^k|_E^2 \ dx < C(k).
$$
The Sobolev embedding theorem tells us that
$$
\int_P (F^k(x))^q \ dx < C(k),
$$
where $q = \frac{2n}{n-2}$ if $n > 2$ or $q = 4$ if $n=2$. It is easy to see that after finite steps, we could reach the conclusion.
\end{proof}

Renjie Feng, \ renjie@math.northwestern.edu

Mathematics Department

Northwestern University\\

Hongnian Huang, \ hnhuang@gmail.com

CMLS

Ecole Polytechnique

\begin{thebibliography}{1}
\bibitem{A1} M. Abreu, {\em K\"ahler geometry of toric varieties and extremal metrics}, International J. Math. 9 (1998) 641-651.

\bibitem{ACGT} V. Apostolov, D. M. J. Calderbank, C. W. T{\o}nnesen-Friedman, P. Gauduchon, {\em Extremal K\"ahler metrics on projective bundles over a curve}, Adv. Math. 227 (2011), 2385-2424.

\bibitem{B1} R. Berman, {\em A thermodynamical formalism for Monge-Ampere equations, Moser-Trudinger inequalities and K\"ahler-Einstein metrics}, arXiv:1011.3976.

\bibitem{BB} L. Bessi\`eres,  G. Besson,  M. Boileau, S. Maillot,  J. Porti, {\em The Geometrisation of 3-manifolds}, EMS Tracts in Mathematics, volume 13, European Mathematical Society, Zurich, 2010.

\bibitem{CG} L. Caffarelli, C. Guti\'errez, {\em Properties of the solutions of the linearized Monge-Amp\`ere equation}, Amer. J. Math. 119(2), (1997), 423-465.

\bibitem{Ca1} E. Calabi, {\em Extremal K\"ahler metric, in Seminar of Differential Geometry}, ed. S. T. Yau, Annals of Mathematics Studies 102, Princeton University
Press (1982), 259-290.

\bibitem{CC} E. Calabi, X.X. Chen, {\em Space of K\"ahler metrics and Calabi flow}, J. Differential Geom. 61 (2002), no. 2, 173-193.

\bibitem {CZ} H.D. Cao, X.P. Zhu, {\em A Complete Proof of the Poincar\'e and Geometrization Conjectures - Application of the Hamilton-Perelman theory of the Ricci flow}, Asian J. Math. 10 (2006), no.2, 165-492.

\bibitem{CLS} B. Chen, A.M. Li, L. Sheng, {\em Extremal metrics on toric surfaces}, arXiv:1008.2607.

\bibitem{Chen1} X.X. Chen, {\em Calabi flow in Riemann surfaces revisited}, IMRN, 6 (2001), 275-297.

\bibitem{ChenHe} X.X. Chen, W.Y. He, {\em On the Calabi flow}, Amer. J. Math. 130 (2008), no. 2, 539-570.

\bibitem{ChenHe2} X.X. Chen, W.Y. He, {\em The Calabi flow on K\"ahler surface with bounded Sobolev constant--(I)}, arXiv:0710.5159

\bibitem{ChenHe3} X.X. Chen, W.Y. He {\em The Calabi flow on toric Fano surface},  Math. Res. Lett. 17 (2010), no. 2, 231-241.

\bibitem{CS} X.X. Chen, S. Sun {\em Calabi flow, Geodesic rays, and uniqueness of constant scalar curvature K\"ahler metrics}, arXiv:1004.2012.

\bibitem{Chr} P. T. Chrusci\'el, {\em Semi-global existence and convergence of solutions of the Robison-Trautman(2-dimensional Calabi) equation}, Comm. Math. Phys. 137 (1991), 289-313.

\bibitem{D1} S.K. Donaldson, {\em Scalar curvature and stability of toric varieties}, Jour. Differential Geometry 62 (2002), 289-349.

\bibitem{D2} S.K. Donaldson, {\em Interior estimates for solutions of Abreu's equation}, Collectanea Math. 56 (2005), 103-142.

\bibitem{D3} S.K. Donaldson, {\em Extremal metrics on toric surfaces: a continuity method}, J. Differential Geom. 79 (2008), no. 3, 389-432.

\bibitem{D4} S.K. Donaldson, {\em Constant scalar curvature metrics on toric surfaces}, Geom. Funct. Anal. 19 (2009), no. 1, 83-136.

\bibitem{D5} S.K. Donaldson, {\em Conjectures in K\"ahler geometry}, Strings and geometry, 71-78, Clay Math. Proc., 3, Amer. Math. Soc., Providence, RI, 2004.

\bibitem{D6} S.K. Donaldson, {\em Lower bounds on the Calabi functional}, J. Differential Geom., 70(3):453-472, 2005.

\bibitem{FS} R. Feng, G. Sz\'ekelyhidi, {\em Periodic solutions of Abreu's equation}, to appear in Math. Res. Lett.

\bibitem{Gu} D. Guan, {\em Extremal-Solitons and Exponential $C^{\infty}$ Convergence of Modified Calabi Flow on Certain $\mathbb{CP}^1$ bundles}, Pacific J. Math. vol. 233 (2007), 91-124.

\bibitem{Gu1} D. Guan, {\em On modified Mabuchi functional and Mabuchi moduli space of \kahler metrics on toric bundles}, Math. Res. Lett. 6 (1999), no. 5-6, 547-555.

\bibitem{HZ} H. Huang, K. Zheng, {\em Stability of Calabi flow near an extremal metric},  Ann. Sc. Norm. Super. Pisa Cl. Sci. (5) 11 (2012), no. 1, 167-175.

\bibitem{H1} H. Huang, {\em On the Extension of the Calabi Flow on Toric Varieties},  Ann. Global Anal. Geom. 40 (2011), no. 1, 1-19.

\bibitem{KJ} B. Kleiner, J. Lott, {\em Notes on Perelman's papers}, Geometry \& Topology 12 (2008), 2587-2855.

\bibitem{JL} F. Jia, A.M. Li, {\em A Bernstein Properties of Some Fourth Order Partial Differential Equations}, Result. Math. 56 (2009), 109-139.

\bibitem{MT} J. Morgan and G. Tian, {\em Ricci flow and the Poincar\'e Conjecture},  Clay Mathematics Monographs, 3. American Mathematical Society, Providence, RI; Clay Mathematics Institute, Cambridge, MA, 2007.

\bibitem{Shi}W. X. Shi, {\em Ricci deformation of the metric on complete noncompact Riemannian manifolds} , J. Differential Geom, 30(2) (1989), 303-394.

\bibitem{St} J. Streets, {\em The long time behavior of fourth-order curvature flows}, to appear in Calc. Var. PDE.

\bibitem{St2} J. Streets, {\em The Gradient Flow of $\int_M |Rm|^2$}, J. Geom. Anal. 18 (2008), no. 1, 249-271.

\bibitem{Sz} G. Sz\'{e}kelyhidi, {\em The Calabi functional on a ruled surface},  Ann. Sci. \'Ec. Norm. Sup\'er. 42 (2009), 837-856.

\bibitem{Sz2} G. Sz\'{e}kelyhidi, {\em Optimal test-configurations for toric varieties}, J. Differential Geom. 80 (2008), 501-523

\bibitem{TW} V. Tosatti, B. Weinkove, {\em The Calabi flow with small initial energy}, Math. Res. Lett. 14 (2007), no. 6, 1033-1039.

\bibitem{TrWa} N. Trudinger, X.J. Wang, {\em The Bernstein problem for affine maximal hypersurfaces}, Invent. Math. 140 (2000), no. 2, 399-422.

\bibitem{TrWa1} N. Trudinger, X.J. Wang, {\em The affine Plateau problem}, J. Amer. Math. Soc. 18 (2005), no. 2, 253-289.

\end{thebibliography}
\end{document}